\documentclass[a4paper,10pt]{article}

\usepackage[utf8]{inputenc}
\usepackage{amsfonts}
\usepackage{amssymb,amsmath,amsthm}

\usepackage{authblk}

\usepackage{tikz}
\usetikzlibrary{plotmarks}
\usetikzlibrary{shapes,arrows}
\usetikzlibrary{positioning}
\usetikzlibrary{trees,mindmap}
\usetikzlibrary{fadings,shapes.arrows,shadows}
\usetikzlibrary{decorations.pathreplacing}
\usetikzlibrary{calc}
\usetikzlibrary{patterns}
\usepackage{pgfplots}
\pgfplotsset{compat=1.10}
\usepgfplotslibrary{fillbetween}

\usepackage{array}
\usepackage{hhline}

\usepackage{booktabs}

\usepackage{subfig}

\usepackage{graphicx}


\newtheorem{defi}{Definition}[section]
\newtheorem{theorem}[defi]{Theorem}

\newtheorem{remark}[defi]{Remark}
\newtheorem{corollary}[defi]{Corollary}

\numberwithin{equation}{section}
\numberwithin{table}{section}
\numberwithin{figure}{section}


\usepackage{geometry}
 \geometry{
 a4paper,
  left=3.4cm,
  right=3.4cm,
  top=3cm,
  bottom=2.5cm,
 }

\begin{document}

\title{\Large \textbf{Numerical study of goal-oriented error control for stabilized 
finite element methods}}
\author[M.\ P.\ Bruchh\"auser, M.\ Bause]
{\large  M.\ P.\ Bruchh\"auser\thanks{bruchhaeuser@hsu-hh.de}\,, 
K.\ Schwegler,
M.\ Bause\thanks{bause@hsu-hh.de}\\
{\small Helmut Schmidt University, Faculty of Mechanical Engineering,\\ 
Holstenhofweg 85, 22043 Hamburg, Germany}
}
\date{}
\maketitle

\begin{abstract}
The efficient and reliable approximation of convection-dominated problems 
continues to remain a challenging task. To overcome the difficulties associated 
with the discretization of convection-dominated equations, stabilization 
techniques and a posteriori error control mechanisms with mesh adaptivity were 
developed and studied in the past. Nevertheless, the derivation of robust a 
posteriori error estimates for standard quantities and in computable norms is 
still an unresolved problem and under investigation. Here we combine the 
Dual Weighted Residual (DWR) method for goal-oriented error control with 
stabilized finite element methods. By a duality argument an error representation 
is derived on that an adaptive strategy is built. 
The key ingredient of this work is the application of a higher order 
discretization of the dual problem in order to make a robust error control for 
user-chosen quantities of interest feasible. By numerical experiments in 2D and 
3D we illustrate that this interpretation of the DWR methodology is capable to 
resolve layers and sharp fronts with high accuracy and to further reduce 
spurious oscillations.
\end{abstract}

\bigskip
\textbf{Keywords:} Convection-dominated problems, stabilized finite element methods, 
mesh adaptivity, goal-oriented a posteriori error control, 
Dual Weighted Residual method, duality techniques

\section{Introduction}
\label{BruchhaeuserSec:Intro}
From the second half of the last century to nowadays, especially in the pioneering works 
of the 1980's (cf., 
e.g.,~\cite{BruchhaeuserBH81,BruchhaeuserHMM86}), 
strong efforts and great progress were made in the development of accurate and 
efficient approximation schemes for convection-dominated problems. For a review 
of fundamental concepts related to their analysis and 
approximation and a presentation of prominent robust numerical methods we refer 
to the monograph \cite{BruchhaeuserRST08}. Convection-dominated problems arise in many 
branches of technology and science. Applications can be found in fluid 
dynamics including turbulence modelling, heat transport, oil extraction from underground 
reservoirs, electro-magnetism, semi-conductor devices, environmental and civil 
engineering as well as in chemical and biological sciences. The solutions of 
convection-dominated transport problems are typically characterized by the occurrence 
of sharp moving fronts and interior or boundary layers. The key challenge for the 
accurate numerical approximation of solutions to convection-dominated problems is thus 
the development of discretization schemes with the ability to capture strong 
gradients of solutions without producing spurious oscillations or smearing effects.

A possible remedy is the application of one of the numerous stabilization concepts that
have been proposed and studied for various discretization techniques in the 
recent years;~cf.~\cite{BruchhaeuserRST08}. 
Here we focus on using finite element discretizations along with residual-based 
stabilizations. Among theses techniques, we choose the streamline upwind 
Petrov--Galerkin (SUPG) method \cite{BruchhaeuserHB79,BruchhaeuserBH81}, which 
aims at reducing non-physical oscillations in streamline direction. Besides the class of 
residual-based stabilization techniques, flux-corrected
transport schemes (cf., e.g., \cite{BruchhaeuserKLT12}) have recently been developed 
and investigated strongly. They have shown their potential to handle the characteristics 
of convection-dominated problems and resolve sharp fronts with high 
accuracy;~cf.~\cite{BruchhaeuserJS08}. Recently, numerical analyses of 
these methods were presented; cf.~\cite{BruchhaeuserBJK16}. In contrast to residual-based 
stabilizations, flux-corrected transport schemes aim at a stabilization on the algebraic 
level. In \cite{BruchhaeuserJS08}, a competitive numerical investigation of the 
performance properties of these and further stabilization concepts is given. In 
\cite{BruchhaeuserJS08} and many other works of the literature authors conclude that 
spurious oscillations in the numerical approximation of convection-dominated problems can 
be reduced by state-of-the-art stabilization techniques, but nevertheless the results are 
not satisfactory yet, in particular, if applications of practical interest and in three 
space dimensions are considered.

A further and widespread technique to capture singular phenomena and sharp 
profiles of solutions is the application of adaptive mesh refinement based on an a 
posteriori error control mechanism. For a review of a posteriori error estimation 
techniques for finite element methods and automatic mesh generation we refer, for 
instance, to the monograph \cite{BruchhaeuserV13}.
The design of an adaptive method requires the availability of an appropriate
a posteriori error estimator. For convection-dominated problems, the derivation of such 
an error estimator, that is robust with respect to the small perturbation parameter of 
the partial differential equation, is delicious and has borne out to be a considerable 
source of trouble. Existing a posteriori error estimates are typically 
either non-robust with respect to the perturbation parameter or provide a control 
of quantities that are typically not of interest in practice or a control in 
non-computable error norms; cf.~\cite{BruchhaeuserDEV13,BruchhaeuserJN13}. Consequently, 
the quality of adaptively refined grids is often not satisfactory yet. Further, only a 
few contributions have been published for convection-dominated problems and the 
considered type of stabilized finite element discretizations. For a more detailed 
discussion we refer to \cite{BruchhaeuserJN13}. 

In this work we use an adaptive method that is based on dual weighted residual 
a posteriori error estimation \cite{BruchhaeuserBR98,BruchhaeuserBR01,BruchhaeuserBR03}.
The Dual Weighted Residual method (DWR) aims at the economical
computation of arbitrary quantities of physical interest, point-values or line/surface 
integrals for instance, by properly adapting the computational mesh. 
Thus, the mesh adaptation process can be based on the computation and control of a
physically relevant goal quantity instead of a control in the traditional 
energy- or $L^2$-norm. In particular in the context of convection-dominated transport, 
the control of local quantities is typically of greater importance than the one of global 
quantities arguing for the application of DWR based techniques.  The DWR approach relies 
on a variational formulation of the discrete problem and uses duality techniques to 
provide a rigorous a posteriori error representation from that a computable error 
indicator can be derived. Of course, such an error estimation can also be obtained with 
respect to global quantities and norms, e.g., the $L^2$-norm or energy 
norm; cf.~\cite{BruchhaeuserBGR10}. The exact error representation within the DWR method 
cannot be evaluated directly, since it depends on the unknown exact solution of a so 
called dual or adjoint problem. The dual solution is used for weighting the local 
residuals within the error representation and has to be computed numerically. This 
approximation cannot be done in the finite element space of the primal problem, since it 
would result in an useless vanishing approximation of the error quantity; 
cf.~\cite{BruchhaeuserBR03}. Approximation by higher-order methods, approximation by 
higher-order interpolation, approximation by difference quotients and approximation by 
local residual problems have been considered so far as suitable approaches for the 
approximation of the dual solution; cf.~\cite{BruchhaeuserBR03}. 

In this work we combine the DWR approach with SUPG stabilized approximations of 
convection-dominated problems. Even though the DWR approach has been applied 
to many classes of partial differential equations, our feeling is that its 
potential for the numerical approximation of convection-dominated problems has not been 
completely understood and explored yet. For simplicity, we restrict ourselves to 
stationary convection-dominated problems here. This is done in order to focus on the 
interaction of stabilization and error control in a simplified framework rather than 
considering sophisticated problems. In \cite{BruchhaeuserEW17}, higher-order finite elements and a 
partition-of-unity technique are used to get the local error estimations within the DWR 
method for a class of elliptic problems. Similarly to \cite{BruchhaeuserEW17}, we solve 
the dual problem by using higher-order finite element techniques, which is a key 
ingredient of this work. However, we differ from \cite{BruchhaeuserEW17} with respect to the 
computation of the local error indicator. Here, we follow the classical way of the DWR philosophy, 
receiving the error representation on every mesh element by a cell-wise 
integration by parts.
Due to the specific character of convection-dominated problems our computational 
experience is that the error control needs a particular care in regions with 
interior and boundary layers and in regions with sharp fronts in order to get an 
accurate quantification of the numerical errors. 
This is in contrast to other works of the literature on the DWR method in that 
strong effort is put onto the reduction of the computational costs for solving the dual 
problem. In numerical experiments we will illustrate the high impact of the proper choice 
of the weights and thereby of the dual solution on the mesh adaptation process. 
The key motivation in this work is to reduce sources of inaccuracies and non-sharp 
estimates in the a posteriori error representation as far as possible 
in order to avoid numerical artefacts. Thereby we aim to improve the quality of the 
numerical approximation and error control in particular in regions with sharp fronts 
and sensitive solution profiles where the application of interpolation 
techniques is expected to be highly defective. We note that in the case of nonlinear 
problems, like the Navier--Stokes equations, the computational costs for solving the 
primal problem dominate, since here a Newton or fixed-point iteration has to be applied, 
whereas the dual problem always remains a linear one. Therefore, in the case of 
nonlinear problems a higher order approach for the dual problem does not necessarily 
dominate the overall computational costs. Our approach is elaborated by careful 
numerical investigations in two and three space dimensions that represent a further 
ingredient of this work.

This work is organized as follows. In Sect.~\ref{BruchhaeuserSec:problem} we 
introduce our model problem together with some global assumptions and our 
notation. Further we present a short derivation of the DWR approach as well as
the finite element approximation in space.
We conclude Sect.~\ref{BruchhaeuserSec:problem} by presenting the SUPG 
stabilized form of the discrete scheme. In Sect.~\ref{BruchhaeuserSec:error} 
we derive a localized error representation in terms of a user chosen target 
quantity. In Sect.~\ref{BruchhaeuserSec:practical} some implementational issues 
and our adaptive solution algorithm are addressed. 
Finally, in Sect.\ \ref{BruchhaeuserSec:numerical} the results of numerical 
computations in two and three space dimensions are presented in order to 
illustrate the feasibility and potential of the proposed approach. 

\section{Problem formulation and stabilized discretization}
\label{BruchhaeuserSec:problem}
In this section we first present our model problems. Then we sketch the stabilized 
approximation of the primal and dual problem within the DWR framework.

\subsection{Model problem and variational fomulation}
\label{BruchhaeuserSubSec:model}
In this work we consider the steady linear convection-diffusion-reaction problem 
\begin{equation}
\begin{array}{r@{\;}c@{\;}l@{\hspace*{2ex}}l}
-\nabla\cdot\left(\varepsilon \nabla u\right) + \boldsymbol{b}\cdot\nabla u + \alpha u &= & f 
 & \mbox{in } \Omega \,,\\[1ex]
u &= & 0  & \mbox{on } \partial\Omega\,.
\end{array}
\label{BruchhaeuserEq:cdr}
\end{equation}
We assume that $\Omega\subset \mathbb{R}^d$, with $d=2$ or $d=3$, is a polygonal or 
polyhedral bounded domain with Lipschitz boundary $\partial\Omega\,.$ For
brevity, problem \eqref{BruchhaeuserEq:cdr} is equipped with homogeneous
Dirichlet boundary conditions. In our numerical examples 
in Sect.~\ref{BruchhaeuserSec:numerical}, we also consider other types of boundary 
conditions. In Remark~\ref{BruchhaeuserRem:InhomDirichletBC}, the 
incorporation of nonhomogeneous Dirichlet and Neumann boundary conditions is 
briefly addressed. Problem \eqref{BruchhaeuserEq:cdr} is considered as a prototype model 
for more sophisticated equations of practical interest, for instance, for the  
Navier--Stokes equations of incompressible viscous flow. 
For an application of our approach to semilinear problems with nonlinear 
reactive terms we refer to \cite{BruchhaeuserS14}.

Here, $0 < \varepsilon \ll 1$ is a small positive diffusion coefficient, 
$\boldsymbol{b}\in(H^1(\Omega))^d\cap(L^\infty(\Omega))^d$ is the flow field or 
convection tensor, 
$\alpha\in L^\infty(\Omega)$ is the reaction coefficient, and
$f\in L^2(\Omega)$ is a given outer source of the unknown scalar quantity $u\,.$
Furthermore, we assume 
that the following condition is fulfilled:
 \begin{align}
 \nabla\cdot \boldsymbol{b}(\boldsymbol{x}) = 0 \;\;\text{and}\;\; 
 \alpha(\boldsymbol{x}) \geq 0 & 
 \quad \forall \boldsymbol{x} \in \Omega\,. 
 \label{BruchhaeuserEq:cond1} 
 \end{align}
It is well known that problem \eqref{BruchhaeuserEq:cdr} along with 
condition~\eqref{BruchhaeuserEq:cond1} admits a unique weak solution 
$u\in V=H_0^1:=\big\{v \in H^1(\Omega)\big| \; v|_{\partial\Omega} = 0\big\}$ 
that satisfies the following variational formulation; 
cf.,~e.g.~\cite{BruchhaeuserRST08, BruchhaeuserA95, BruchhaeuserJN13}.

\noindent
\textit{\indent Find $u \in V$ such that}
\begin{equation}
 A(u)(\varphi) = F(\varphi) \quad \forall \varphi \in V\,,
 \label{BruchhaeuserEq:weakp}
\end{equation}
\textit{\indent where the bilinear form $A:V\times V \mapsto \mathbb{R}$ 
and the linear form $F:V\mapsto \mathbb{R}$ are given 
by
}
\begin{eqnarray*}
 A(u)(\varphi) & := & (\varepsilon\nabla u, \nabla \varphi) 
 + (\boldsymbol{b}\cdot \nabla u, \varphi)
 +(\alpha u, \varphi)\,, \\
 F(\varphi) & := & (f,\varphi)\,.
\end{eqnarray*}
We denote by $(\cdot,\cdot)$ the inner product of $L^2(\Omega)$ and by $\|\cdot\|$ 
the associated norm with $\|v\|=(v,v)^{\frac{1}{2}}=
(\int_\Omega |v|^2 d\boldsymbol{x})^{\frac{1}{2}}\,.$

\subsection{The Dual Weighted Residual approach}
\label{BruchhaeuserSubSec:dwr}
The DWR method aims at the control of an error in an arbitrary user-chosen target 
functional $J$ of physical relevance. To get an error representation with respect to this 
target functional, an additional dual problem has to be solved. Before we focus on this 
error representation, we introduce the derivation of the dual problem of 
\eqref{BruchhaeuserEq:weakp} needed below in the DWR approach. For this, we consider the 
Euler-Lagrangian method of constrained optimization. For some given functional 
$J:V\mapsto \mathbb{R}$ we consider solving
\begin{equation*}
J(u) = \min\{J(v)\,, \; v\in V\,, \; \text{where}\; A(v)(\varphi)=F(\varphi)\; \forall 
\varphi \in V\}\,.
\end{equation*}
For this we define the corresponding Lagrangian functional $\mathcal{L}:V \times
V \mapsto \mathbb{R}$ by
\begin{equation}
 \mathcal{L}(u,z):=J(u)+F(z)-A(u)(z)\,,
 \label{BruchhaeuserDef:Lext}
\end{equation}
where we refer to $z\in V$ as the dual variable (or Lagrangian multiplier), 
cf.~\cite{BruchhaeuserBR03}. We determine a stationary point $\{u,z\}\in V\times V$ of 
$\mathcal{L}(\cdot,\cdot)$ by the condition that 
\begin{equation}
 \mathcal{L}'(u,z)(\psi,\varphi) = 0 \quad \forall \{\psi,\varphi\}\in V\times V\,,
 \label{BruchhaeuserEq:stationarycondition}
\end{equation}
or, equivalently, by the system of equations that
\begin{align}
A'(u)(\psi,z) & = J'(u)(\psi) \quad \forall \psi \in V\,, 
\label{BruchhaeuserEq:ucomp}
\\[1ex]
A(u)(\varphi) & = F(\varphi) \quad \quad \;\; \forall 
\varphi \in V\,.
\label{BruchhaeuserEq:zcomp}
\end{align}
Eq.\ \eqref{BruchhaeuserEq:zcomp}, the $z$-component of the stationary 
condition \eqref{BruchhaeuserEq:stationarycondition}, is just the given primal problem 
\eqref{BruchhaeuserEq:weakp} whereas Eq.~\eqref{BruchhaeuserEq:ucomp}, 
the $u$-component of \eqref{BruchhaeuserEq:stationarycondition}, is called the dual or 
adjoint problem. In strong form, the dual problem reads as
\begin{equation}
\begin{array}{r@{\;}c@{\;}l@{\hspace*{2ex}}l}
-\nabla\cdot\left(\varepsilon \nabla z\right) - \boldsymbol{b}\cdot\nabla z + \alpha z &= & j 
 & \mbox{in } \Omega \,,\\[1ex]
z &= & 0  & \mbox{on } \partial\Omega\,, 
\end{array}
\label{BruchhaeuserEq:cdr_dual}
\end{equation}
where $j\in L^2(\Omega)$ is a $L^2$-representation of the target function $J(\cdot)$ 
that is supposed to exist; cf.\ Eq.~\eqref{BruchhaeuserEq:weaklinearformdual}. The 
bilinear form $A'$ is given by
\begin{equation*}
A'(u)(\psi,z) = (\varepsilon\nabla \psi, \nabla z) 
 + (\boldsymbol{b}\cdot \nabla \psi, z)
 +(\alpha \psi, z) = A(\psi)(z)\,.
\end{equation*}
Applying integration by parts to the convective term along with the condition  
\eqref{BruchhaeuserEq:cond1} yields for $A'(u)(\psi,z)$ the representation 
that
\begin{equation}
A^\ast(z)(\psi) := A'(u)(\psi,z) = (\varepsilon\nabla z, \nabla \psi) 
 - (\boldsymbol{b}\cdot \nabla z, \psi)
 +(\alpha z, \psi) \,.
  \label{BruchhaeuserEq:weakbilinearformdual}
\end{equation}
Thus we have the following Euler-Lagrange system.

\textit{Find ${\{u,z\}\in V \times V}$ such that}
\begin{eqnarray}
\label{BruchhaeuserEq:weaklinearformdual_1}
A(u)(\varphi) & = &  \;F(\varphi)   \quad \forall \varphi \in V\,, \\
A^\ast(z)(\psi) & = &  \;J(\psi)  \quad \forall \psi \in V\,, 
\label{BruchhaeuserEq:weaklinearformdual_2}
\end{eqnarray}
\textit{\indent where the functional $J(\cdot)$ is supposed to admit the 
$L^2$-representation}
\begin{equation}
J(\psi) := (j(u),\psi)\,.
\label{BruchhaeuserEq:weaklinearformdual}
\end{equation}

\subsection{Discretization in space}
\label{BruchhaeuserSubSec:discretization}
Here we present the spatial discretization of \eqref{BruchhaeuserEq:cdr}. 
We use Lagrange type finite element spaces of continuous functions that are piecewise 
polynomials. For the discretization in space, we consider a decomposition 
$\mathcal{T}_{h}$
of the domain $\Omega$ into disjoint elements $K$, such that 
$\bar{\Omega}=\cup_{K\in\mathcal{T}_{h}}\bar{K}$. Here, we choose the elements 
$K\in\mathcal{T}_{h}$ to be quadrilaterals for $d=2$ and hexahedrals for $d=3$. We denote 
by $h_{K}$ the diameter of the element $K$. The global space 
discretization parameter $h$ is given by $h:=\max_{K\in\mathcal{T}_{h}}h_{K}$. 
Our mesh adaptation process yields locally refined and coarsened cells, which is 
facilitated by using hanging nodes \cite{BruchhaeuserCO84}. We point out that 
the global conformity of the finite element approach is preserved since the unknowns 
at such hanging nodes are eliminated by interpolation between the neighboring 
'regular' nodes, cf.~\cite{BruchhaeuserBR03}.

The construction of the approximating function spaces is done on the reference 
element $\hat{K}:= [0, 1]^{d}\,,d=2, 3$. Therefore, we introduce the mapping 
$\mathcal{T}_{K}:\hat{K}\rightarrow K$ from the reference element $\hat{K}$ onto
an element $K\in\mathcal{T}_{h}$. On that reference domain $\hat{K}$, we 
introduce the following finite element spaces
\begin{displaymath}
\begin{array}{r@{\,}c@{\,}l}
\hat{\mathcal{Q}}_{h}^{p_{1}, p_{2}} &:=&
\Bigg\{ \hat{\varphi} : [0,1]^{2} \rightarrow \mathbb{R} \,\Bigg|\,
  \hat{\varphi}(\hat{\boldsymbol{x}}) :=
  \displaystyle \sum_{i=0}^{p_{1}} \sum_{j=0}^{p_{2}}
  \hat{\varphi}_{i,j} x_{1}^{i} x_{2}^{j}\,,
  \text{ with }
  \hat{\varphi}_{i,j} \in \mathbb{R} \Bigg\} \quad \text{and}\\[4.5ex]
\hat{\mathcal{Q}}_{h}^{p_{1}, p_{2}, p_{3}} &:=&
  \Bigg\{\hat{\varphi}:[0,1]^{3} \rightarrow\mathbb{R} \,\Bigg|\,
  \hat{\varphi}(\hat{\boldsymbol{x}}):= 
  \displaystyle\sum_{i=0}^{p_{1}}\sum_{j=0}^{p_{2}}\sum_{k=0}^{p_{3}}
  \hat{\varphi}_{i,j,k}x_{1}^{i}x_{2}^{j}x_{3}^{k}\,,
  \text{ with } \hat{\varphi}_{i,j,k}\in\mathbb{R}\Bigg\}\,.
\end{array}
\end{displaymath}
Then, for an arbitrary polynomial degree $p\geq0$, the approximating finite 
element spaces are defined by
\begin{equation}
  V_{h}^{p}:=
  \begin{cases}
  \big\{v\in V\cap C(\bar{\Omega})\big|v|_{K}\circ\mathcal{T}^{-1}_{K}\in
  \hat{\mathcal{Q}}_{h}^{p,p}
  \,,\forall K\in\mathcal{T}_{h} \big\}\,, & d = 2\,,\\[1ex]
  \big\{v\in V\cap C(\bar{\Omega})\big|v|_{K}\circ\mathcal{T}^{-1}_{K}\in
  \hat{\mathcal{Q}}_{h}^{p,p,p}
  \,,\forall K\in\mathcal{T}_{h} \big\}\,, & d = 3\,.
  \end{cases}
\label{Bruchhaeusereq:femspaces}
\end{equation}

\subsection{Streamline upwind Petrov-Galerkin stabilization}
\label{BruchhaeuserSubSec:streamline}
In order to reduce spurious and non-physical oscillations of the discrete 
solutions, arising close to layers or sharp fronts, we apply the SUPG method,
a well-known residual based stabilization technique for finite element 
approximations;  
cf.~\cite{BruchhaeuserRST08, BruchhaeuserAJ15, BruchhaeuserJS08, BruchhaeuserBS12}. 
The SUPG approach aims at an stabilization in the streamline direction; 
cf.\ \cite{BruchhaeuserRST08}. In particular, existing a priori error analysis 
ensure its convergence in the natural norm of the scheme including the control of the 
approximation error in streamline direction; cf.\ \cite[Thm. 3.27]{BruchhaeuserRST08}. 
Applying the SUPG approach to the discrete counterpart of 
\eqref{BruchhaeuserEq:weaklinearformdual_1} and 
\eqref{BruchhaeuserEq:weaklinearformdual_2} yields the following stabilized 
discrete system of equations.

\textit{Find $\{u_h,z_h\}~\in~V_h^p~\times~V_h^{p+s}\,,~s\geq~1\,,$ such that}
\begin{eqnarray}
A_S(u_h)(\varphi_h) & = &  \;F(\varphi_h)  \quad \forall \varphi_h \in V_h^p\,, 
\label{BruchhaeuserEq:ASprimal}\\
A_S^\ast(z_h)(\psi_h) & = &  \;J(\psi_h) \quad \forall \psi_h \in V_h^{p+s}\,, s\geq 1\,,
\label{BruchhaeuserEq:ASdual}
\end{eqnarray}
\textit{\indent where the stabilized bilinear forms are given by}
\begin{eqnarray*}
A_S(u_h)(\varphi_h) & := & A(u_h)(\varphi_h) + S(u_h)(\varphi_h)\,, \\
A_S^\ast(z_h)(\psi_h) & := & A^\ast(z_h)(\psi_h) + S^\ast(z_h)(\psi_h)\,,
\end{eqnarray*}
\textit{\indent and the stabilized terms are defined by}
\begin{eqnarray*}
S(u_h)(\varphi_h) & := & \sum_{K \in \mathcal{T}_h}\delta_K(R(u_h),\boldsymbol{b}\cdot
\nabla \varphi_h)_K   \,, 
\\
R(u_h) & := & -\nabla \cdot (\varepsilon \nabla u_h) + \boldsymbol{b} \cdot \nabla u_h
+\alpha u_h - f\,, \\
S^\ast(z_h)(\psi_h) & := & \sum_{K \in 
\mathcal{T}_h}\delta_K^\ast(R^\ast(z_h),-\boldsymbol{b}\cdot
\nabla \psi_h)_K   \,, 
\\
R^\ast(z_h) & := & -\nabla \cdot (\varepsilon \nabla z_h) - \boldsymbol{b} \cdot \nabla z_h
+ \alpha z_h - j(u_h)\,. 
\end{eqnarray*}

\begin{remark}
The proper choice of the stabilization parameters $\delta_K$ and $\delta_K^\ast$ is an
important issue in the application of the SUPG approach; 
cf.~\cite{BruchhaeuserJKS11,BruchhaeuserJK13} and the discussion therein. 
As proposed by the analysis of stabilized finite element methods in 
\cite{BruchhaeuserLR06,BruchhaeuserBS12}, we choose the parameter $\delta_K$ and 
$\delta_K^\ast$ as 
\begin{equation*}
\delta_K,\delta_K^\ast \sim \text{min}\bigg\{\frac{h_K}{p\|\boldsymbol{b}\|_{L^\infty(K)}};
\frac{h_K^2}{p^4\varepsilon};
\frac{1}{\alpha}
\bigg\}\,.
\end{equation*}
Here, the symbol $\sim$ denotes the equivalence up to a multiplicative constant 
independent of $K$. This constant has to be understood as a numerical tuning parameter.
\end{remark}

\begin{remark}
We note that the stabilization within the dual problem acts in the negative 
direction of the flow field $\boldsymbol{b}$; cf.\ Eq.\ 
\eqref{BruchhaeuserEq:weakbilinearformdual}. The discrete dual probleme 
\eqref{BruchhaeuserEq:ASdual} is based on a \textit{first dualize and then 
stabilize (FDTS)} principle in that the dual problem of the weak equation 
\eqref{BruchhaeuserEq:weaklinearformdual_2} is derived first. The stabilization is then 
implemented by applying the SUPG method to the discrete counterpart of the dual problem. 
The alternative strategy \textit{first stabilize and then dualize (FSTD)} of 
transposing the stabilized fully discrete equation \eqref{BruchhaeuserEq:ASprimal} 
requires differentiation of the stabilization terms. In general, the strategies 
\textit{FDTS} and \textit{FSTD} do not commute with each other. In the literature 
(cf.\ \cite{BruchhaeuserB00}) the possibility of stability problems is noted for the  
 \textit{FSTD} strategy. In our performed numerical experiments the \textit{FSTD} 
strategy did not show any lack of stability but led to slightly weaker results; cf.\ 
\cite{BruchhaeuserS14}. For these reasons we focus on the \textit{FDTS} strategy only in 
this work. 
\end{remark}

\section{Error Estimation}
\label{BruchhaeuserSec:error}

In this section our aim is to derive a localized (i.e.\ elementwise) a posteriori error 
representation for the stabilized finite element approximation in terms of the target 
quantity $J(\cdot)$ by using the concepts of the DWR approach briefly introduced in 
Subsect.~\ref{BruchhaeuserSubSec:dwr}. Afterwards, an adaptive mesh refinement process is 
built upon the thus given error representation. Refining and coarsening of the finite 
element mesh is considered. To achieve this aim, some abstract results are needed. First, 
we show an error representation by means of the Lagrangian functional. From this, an 
error representation in terms of the primal and dual residual is deduced. In the final 
step the dual residual is substituted by the primal residual and some stabilization 
terms. 

To start with, we put 
\begin{eqnarray}
x & := &\{u,z\}\,, \,y:=\{\psi,\varphi\} \in V \times V \,,
\label{BruchhaeuserEq:xandy} \\
x_h & := & \{u_h,z_h\}\,, \,y_h:=\{\psi_h,\varphi_h\} \in V_h^p \times V_h^p \,,
\label{BruchhaeuserEq:xhandyh}
\end{eqnarray}
and
\begin{equation*}
\mathcal{S}(x_h)(y_h):=S(u_h)(\varphi_h) + S^\ast(z_h)(\psi_h)\,.
\end{equation*}
The discrete solution $x_h \in V_h^p \times V_h^p$ then satisfies the variational equation
\begin{equation}
\mathcal{L}'(x_h)(y_h) = \mathcal{S}(x_h)(y_h) \quad \forall y_h \in V_h^p \times V_h^p\,.
\label{BruchhaeuserEq:LstrichS}
\end{equation}
Now we develop the error in terms of the Lagrangian functional.
\begin{theorem}
 Let $X$ be a function space and $\mathcal{L}: X \rightarrow \mathbb{R}\;$ be a
 three times differentiable functional on $X$. Suppose that $x_c\in X_c$ with 
 some (''continuous'') function space 
$X_c \subset X$ is a stationary point of $\mathcal L$. Suppose that $x_d \in X_d$ with 
some (''discrete'') function space $X_d \subset X\,,$ with not necessarily $X_d \subset 
X_c\,,$ is a Galerkin approximation to $x_c$ being defined by the equation 
\begin{equation}
\label{BruchhaeuserDef:L0}
\mathcal{L}'(x_d)(y_d) = \mathcal{S}(x_d)(y_d) \quad  \forall y_d \in X_d\,.
\end{equation}
In addition, suppose that the auxiliary condition 
\begin{equation}
\label{BruchhaeuserDef:L1}
\mathcal{L}'(x_c)(x_d) = 0 
\end{equation}
is satisfied. Then there holds the error representation
\begin{equation*}
\mathcal{L}(x_c) - \mathcal{L}(x_d) = \frac{1}{2} \mathcal{L}'(x_d) (x_c- y_d) + 
\frac{1}{2} 
\mathcal{S}(x_d)(y_d -x_d) + \mathcal{R} \quad \forall y_d \in X_d\,,
\end{equation*}
where the remainder $\mathcal{R}$ is defined by
\begin{equation}
\label{BruchhaeuserDef:L4}
\mathcal{R} = \frac{1}{2}\int_0^1 \mathcal{L}^{\prime\prime\prime}(x_d +s e)(e,e,e)
\cdot s \cdot (s-1)\,\mathrm{d} s\,,
\end{equation}
with the notation $e:=x_c-x_d$.
\label{BruchhaeuserThm:L}
\end{theorem}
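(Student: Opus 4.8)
The plan is to imitate the classical Becker--Rannacher error representation argument, adapting it to the presence of the stabilization term $\mathcal{S}$ and to the fact that $X_d$ need not be a subspace of $X_c$. First I would introduce the function $g:[0,1]\to\mathbb{R}$ defined by $g(s):=\mathcal{L}(x_d + s e)$, where $e=x_c-x_d$, so that $g(1)-g(0)=\mathcal{L}(x_c)-\mathcal{L}(x_d)$ is exactly the quantity we want to represent. The idea is to evaluate this difference by the trapezoidal rule with exact remainder: for a $C^3$ function one has
\begin{equation*}
g(1)-g(0) = \tfrac{1}{2}\bigl(g'(0)+g'(1)\bigr) + \tfrac{1}{2}\int_0^1 g'''(s)\, s\,(s-1)\,\mathrm{d}s\,.
\end{equation*}
This identity is the only "analytic" ingredient and is verified by two integrations by parts on the right-hand side; the resulting remainder integral is precisely $\mathcal{R}$ in \eqref{BruchhaeuserDef:L4} once we note $g'''(s)=\mathcal{L}'''(x_d+se)(e,e,e)$.

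Next I would compute the two boundary terms. Since $g'(s)=\mathcal{L}'(x_d+se)(e)$, we get $g'(1)=\mathcal{L}'(x_c)(e)=\mathcal{L}'(x_c)(x_c-x_d)$ and $g'(0)=\mathcal{L}'(x_d)(e)=\mathcal{L}'(x_d)(x_c-x_d)$. Because $x_c$ is a stationary point of $\mathcal{L}$ on $X_c$, the term $\mathcal{L}'(x_c)(x_c)$ vanishes, and by the auxiliary condition \eqref{BruchhaeuserDef:L1} the term $\mathcal{L}'(x_c)(x_d)$ vanishes as well, so $g'(1)=0$. For $g'(0)$ I would insert an arbitrary $y_d\in X_d$ by writing $x_c-x_d = (x_c-y_d) + (y_d-x_d)$, giving $g'(0)=\mathcal{L}'(x_d)(x_c-y_d)+\mathcal{L}'(x_d)(y_d-x_d)$. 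On the second summand the Galerkin relation \eqref{BruchhaeuserDef:L0} applies, since $y_d-x_d\in X_d$, yielding $\mathcal{L}'(x_d)(y_d-x_d)=\mathcal{S}(x_d)(y_d-x_d)$. Collecting terms produces
\begin{equation*}
\mathcal{L}(x_c)-\mathcal{L}(x_d) = \tfrac{1}{2}\mathcal{L}'(x_d)(x_c-y_d) + \tfrac{1}{2}\mathcal{S}(x_d)(y_d-x_d) + \mathcal{R}\,,
\end{equation*}
which is the claimed formula, valid for every $y_d\in X_d$.

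The steps are all elementary once the right auxiliary function is chosen, so I do not anticipate a genuinely hard obstacle; the main point requiring care is bookkeeping of where each hypothesis enters. In particular one must be careful that $x_d+se$ lies in $X$ for all $s\in[0,1]$ so that $\mathcal{L}$ and its derivatives are defined along the segment --- this is why $\mathcal{L}$ is assumed defined on all of $X$ rather than only on $X_c$ or $X_d$, and it is the reason the statement tolerates $X_d\not\subset X_c$. A second subtlety is that the stationarity of $x_c$ must be used only through the pairing with directions of the form $x_c$ and (via \eqref{BruchhaeuserDef:L1}) $x_d$, not with a generic test direction, since $x_d\notin X_c$ in general; the auxiliary condition \eqref{BruchhaeuserDef:L1} is exactly what compensates for this. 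Finally, I would double-check the sign and weight $s(s-1)$ in the remainder against the two integration-by-parts steps, as that is the place where an arithmetic slip is most likely.
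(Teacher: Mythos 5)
Your proposal is correct and follows essentially the same route as the paper's proof: the fundamental theorem of calculus along the segment from $x_d$ to $x_c$, the trapezoidal rule with exact integral remainder giving $\mathcal{R}$, stationarity plus the auxiliary condition \eqref{BruchhaeuserDef:L1} to kill the endpoint term at $x_c$, and the Galerkin relation \eqref{BruchhaeuserDef:L0} applied to $y_d-x_d$ after splitting $x_c-x_d=(x_c-y_d)+(y_d-x_d)$. The only difference is that you spell out the integration-by-parts verification of the remainder formula, which the paper leaves implicit.
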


\begin{proof} 
We let $e = x_c -x_d$. By the fundamental theorem of calculus it holds that 
\[
\mathcal L(x_c) - \mathcal L(x_d) = \int_0^1 \mathcal L'(x_d+s e)(e)\,\mathrm{d} s\,.
\]
Approximating the integral by the trapezoidal rule yields that 
\begin{equation}
\label{BruchhaeuserEq:EL10}
\mathcal{L}(x_c) - \mathcal{L}(x_d) = \frac{1}{2} \mathcal{L}'(x_d)(x_c -x_d) + 
\frac{1}{2} 
\mathcal{L}'(x_c)(x_c -x_d) + \mathcal{R}\,,
\end{equation}
with $\mathcal{R}$ being defined by 
\eqref{BruchhaeuserDef:L4}
. By the supposed stationarity 
of $\mathcal{L}$ in $x_c$ along with the assumption 
\eqref{BruchhaeuserDef:L1}
the second of the terms on the right-hand side of 
\eqref{BruchhaeuserEq:EL10} 
vanishes. Together with 
eq.\ 
\eqref{BruchhaeuserDef:L0} 
we then 
get that 
\begin{eqnarray*}
\mathcal{L}(x_c) - \mathcal{L}(x_d) & = & \frac{1}{2}\mathcal{L}'(x_d) (x_c- y_d) 
+ \frac{1}{2} \mathcal{L}'(x_d) (y_d - x_d) + \mathcal{R}\\[1ex]
& = & \frac{1}{2} \mathcal{L}'(x_d) (x_c- y_d) + \frac{1}{2} 
\mathcal{S}(x_d)(y_d-x_d) + \mathcal{R}\,,
\end{eqnarray*}
for all $y_d\in X_d$. This completes the proof of the theorem.
\end{proof}
For the subsequent theorem we introduce the primal and dual residuals by
\begin{eqnarray}
\rho(u_h)(\varphi) & := &  \;F(\varphi)-A(u_h)(\varphi)   
\;\quad\quad\quad \forall \varphi \in V\,, 
\label{BruchhaeuserDef:primalresidual}\\
\rho^\ast(z_h)(\psi) & := &  \;J'(u_h)(\psi)-A^\ast(z_h)(\psi)   
\quad \forall \psi \in V\,.
\label{BruchhaeuserDef:dualresidual}
\end{eqnarray}
\begin{theorem}
\label{BruchhaeuserThm:J}
Suppose that $\{u,z\}\in V \times V$ is a stationary point of 
the Lagrangian functional $\mathcal{L}$ defined in \eqref{BruchhaeuserDef:Lext} 
such that \eqref{BruchhaeuserEq:stationarycondition} is satisfied. 
Let $\{u_{h},z_{h}\}\in V_h^p \times V_h^p$ denote its Galerkin approximation 
being defined by \eqref{BruchhaeuserEq:ASprimal} and \eqref{BruchhaeuserEq:ASdual} 
such that \eqref{BruchhaeuserEq:LstrichS} is satisfied. 
Then there holds the error representation that 
\begin{equation}
J(u) - J(u_{h}) = \frac{1}{2}\rho(u_{h}) (z - \varphi_{h}) + 
\frac{1}{2}\rho^\ast(z_{h}) (u-\psi_{h}) + \mathcal{R}_{\mathcal{S}} + 
\mathcal{R}_{J}\,,
\label{BruchhaeuserThm:J1}
\end{equation}
for arbitrary functions $\{\varphi_{h},\psi_{h}\}\in V_h^p \times V_h^p$, where 
the remainder terms are defined by 
\begin{equation*}
\mathcal{R}_{\mathcal{S}} := \frac{1}{2} S(u_{h})(\varphi_{h}+z_{h}) + \frac{1}{2} 
S^\ast(z_{h})(\psi_{h}-u_{h})\,,
\end{equation*}
and
\begin{equation}
\label{BruchhaeuserThm:J3}
\mathcal{R}_{J} := \frac{1}{2}\int_0^1 J'''(u_{h}+s\cdot e)(e,e,e) 
\cdot s\cdot (s-1)\,\mathrm{d} s\,,
\end{equation}
with $e=u-u_{h}$.
\end{theorem}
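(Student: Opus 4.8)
The plan is to derive the identity \eqref{BruchhaeuserThm:J1} as a direct specialization of the abstract error representation in Theorem~\ref{BruchhaeuserThm:L}. First I would instantiate that theorem with $X := V\times V$, the ``continuous'' space $X_c := V\times V$ and $x_c := \{u,z\}$, and the ``discrete'' space $X_d := V_h^p\times V_h^p$ and $x_d := \{u_h,z_h\}$. The three hypotheses of Theorem~\ref{BruchhaeuserThm:L} are then verified one by one: three-times differentiability of the Lagrangian $\mathcal{L}$ from \eqref{BruchhaeuserDef:Lext} reduces to (and is assumed for) $J$, since $F$ is linear and $A(\cdot)(\cdot)$ is bilinear; $x_c$ is a stationary point of $\mathcal{L}$ by \eqref{BruchhaeuserEq:stationarycondition}; the Galerkin relation \eqref{BruchhaeuserDef:L0} is exactly \eqref{BruchhaeuserEq:LstrichS}; and the auxiliary condition \eqref{BruchhaeuserDef:L1}, i.e.\ $\mathcal{L}'(x_c)(x_d)=0$, follows at once from stationarity of $x_c$ because $x_d\in X_d\subset X$. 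Theorem~\ref{BruchhaeuserThm:L} then delivers, for every $y_d=\{\psi_h,\varphi_h\}\in V_h^p\times V_h^p$,
\[
\mathcal{L}(x_c)-\mathcal{L}(x_d)=\tfrac12\,\mathcal{L}'(x_d)(x_c-y_d)+\tfrac12\,\mathcal{S}(x_d)(y_d-x_d)+\mathcal{R},
\]
with $\mathcal{R}$ as in \eqref{BruchhaeuserDef:L4} and $e=x_c-x_d=\{u-u_h,z-z_h\}$.

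The second step is to evaluate each term concretely. For the left-hand side, at $x_c$ the primal weak equation tested with $z\in V$ gives $A(u)(z)=F(z)$, hence $\mathcal{L}(x_c)=J(u)$; at $x_d$, testing the stabilized primal equation \eqref{BruchhaeuserEq:ASprimal} with $z_h$ gives $F(z_h)-A(u_h)(z_h)=S(u_h)(z_h)$, hence $\mathcal{L}(x_d)=J(u_h)+S(u_h)(z_h)$. For the first-derivative term, differentiating \eqref{BruchhaeuserDef:Lext} and using the identification $A^\ast(z)(\psi)=A'(u)(\psi,z)$ from \eqref{BruchhaeuserEq:weakbilinearformdual} yields $\mathcal{L}'(u_h,z_h)(\{\psi,\varphi\})=\rho^\ast(z_h)(\psi)+\rho(u_h)(\varphi)$ with the residuals \eqref{BruchhaeuserDef:primalresidual}--\eqref{BruchhaeuserDef:dualresidual}; evaluated on $x_c-y_d=\{u-\psi_h,\,z-\varphi_h\}$ this is $\rho^\ast(z_h)(u-\psi_h)+\rho(u_h)(z-\varphi_h)$. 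For the stabilization term, by definition of $\mathcal{S}$ and with $y_d-x_d=\{\psi_h-u_h,\,\varphi_h-z_h\}$ one gets $\mathcal{S}(x_d)(y_d-x_d)=S(u_h)(\varphi_h-z_h)+S^\ast(z_h)(\psi_h-u_h)$.

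The third step is bookkeeping. Substituting these expressions and moving $S(u_h)(z_h)$ to the right-hand side, the streamline terms involving $u_h,z_h$ combine by linearity of $S(u_h)(\cdot)$ in its second slot: $S(u_h)(z_h)+\tfrac12 S(u_h)(\varphi_h-z_h)=\tfrac12 S(u_h)(\varphi_h+z_h)$, which together with $\tfrac12 S^\ast(z_h)(\psi_h-u_h)$ is precisely $\mathcal{R}_{\mathcal{S}}$. Finally, $\mathcal{R}$ is identified with $\mathcal{R}_J$ of \eqref{BruchhaeuserThm:J3}: because the linear part $F$ and the bilinear part $A$ of $\mathcal{L}$ have vanishing third derivative, $\mathcal{L}'''(x_d+se)(e,e,e)=J'''(u_h+s(u-u_h))(u-u_h,u-u_h,u-u_h)$, so $\mathcal{R}=\mathcal{R}_J$ with $e=u-u_h$. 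Assembling everything gives \eqref{BruchhaeuserThm:J1}.

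I do not expect a genuine obstacle here; the work is essentially algebraic once Theorem~\ref{BruchhaeuserThm:L} is in hand. The points needing care are the consistent handling of the ordered-pair conventions $x=\{u,z\}$, $y=\{\psi,\varphi\}$ so that $\rho(u_h)$ ends up paired with the $z$-slot and $\rho^\ast(z_h)$ with the $u$-slot; the mild abuse of testing the stabilized primal equation \eqref{BruchhaeuserEq:ASprimal} with $z_h$, which is legitimate because the abstract error representation here is posed on $V_h^p\times V_h^p$ rather than on the enriched dual space $V_h^{p+s}$ of \eqref{BruchhaeuserEq:ASdual}; and the verification that the third derivative of $\mathcal{L}$ involves only the $u$-components of its directional arguments, which is exactly what removes the dual error from the remainder.
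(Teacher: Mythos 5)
Your proposal is correct and follows essentially the same route as the paper's proof: apply Theorem~\ref{BruchhaeuserThm:L} to $x_c=\{u,z\}$ and $x_d=\{u_h,z_h\}$, use $J(u)-J(u_h)=\mathcal{L}(x_c)-\mathcal{L}(x_d)+S(u_h)(z_h)$, rewrite $\mathcal{L}'(x_h)(x-y_h)$ as the sum of the primal and dual residuals, and recombine the stabilization terms into $\mathcal{R}_{\mathcal{S}}$. You merely make explicit a few steps the paper leaves implicit (the evaluation $\mathcal{L}(x_c)=J(u)$, $\mathcal{L}(x_d)=J(u_h)+S(u_h)(z_h)$, and the verification of the auxiliary condition \eqref{BruchhaeuserDef:L1} from $X_d\subset X_c$), which is fine.
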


\begin{proof}
Let $x$, defined by \eqref{BruchhaeuserEq:xandy}, be a 
stationary point of $\mathcal{L}$ in \eqref{BruchhaeuserDef:Lext} such that 
\eqref{BruchhaeuserEq:stationarycondition} is satisfied. 
Let $x_{h}$, defined by \eqref{BruchhaeuserEq:xhandyh}, denote the Galerkin 
approximation of $x$ that is given by \eqref{BruchhaeuserEq:ASprimal} and 
\eqref{BruchhaeuserEq:ASdual}, respectively.  
From \eqref{BruchhaeuserDef:Lext} along with 
\eqref{BruchhaeuserEq:stationarycondition} and \eqref{BruchhaeuserEq:ASprimal} 
we conclude that 
\begin{equation*}
\label{BruchhaeuserThm:J4}
J(u) - J(u_{h}) = \mathcal{L}(x) - \mathcal{L}(x_{h}) + 
S(u_{h})(z_{h})\,.
\end{equation*}
Using Thm.~\ref{BruchhaeuserThm:L} we get that 
\begin{equation}
\label{BruchhaeuserThm:J5}
J(u) - J(u_{h}) = \frac{1}{2} \mathcal{L}'(x_{h})(x-y_{h}) + 
\frac{1}{2} \mathcal{S}(x_{h})(y_{h} -x_{h}) + S(u_{h})(z_{h})+ \mathcal{R}\,,
\end{equation}
for all $\{\psi_{h},\varphi_{h}\}\in V_h^p \times V_h^p$ with the remainder 
$\mathcal{R}$ being defined by \eqref{BruchhaeuserDef:L4}. Recalling 
the definition \eqref{BruchhaeuserDef:Lext} of $\mathcal{L}$ yields for the 
remainder $\mathcal{R}$ the asserted representation \eqref{BruchhaeuserThm:J3}, 
due to the fact that all parts of the third derivative of $\mathcal{L}$ vanish 
except the third derivative of $J$.

Taking into account that 
$\mathcal{L}_u(u,z)(\psi) + \mathcal{L}_z(u,z)(\varphi) = 0$,where 
$\mathcal{L}_u$ and $\mathcal{L}_z$ denote the $u$-component and $z$-component, 
respectively, of the stationary condition 
\eqref{BruchhaeuserEq:stationarycondition}, we can write the Fr\'{e}chet
derivative of the Lagrangian functional as
\begin{eqnarray*}
\mathcal{L}'(u_{h},z_{h})(u-\psi_{h},z- \varphi_{h}) 
& = & \; J'(u_{h})(u-\psi_{h}) - A^{\ast}(z_{h})(u-\psi_{h})  \\
& & + \; F(z-\varphi_{h}) -A(u_{h})(z-\varphi_{h})  \\
& = & \; \rho^\ast(z_{h})(u-\psi_{h}) + \rho(u_{h})(z-\varphi_{h})\,,
\end{eqnarray*}
for all $\{\psi_{h},\varphi_{h}\}\in V_h^p \times V_h^p$. 
Substituting this identity into \eqref{BruchhaeuserThm:J5} yields that
\begin{equation}
\label{BruchhaeuserThm:J6}
\begin{split}
J(u) - J(u_{h})  =& \frac{1}{2} 
\rho^\ast(z_{h})(u-\psi_{h}) + \frac{1}{2}
\rho(u_{h})(z-\varphi_{h})\\
& + \frac{1}{2} \mathcal{S}(x_{h})(y_{h} -x_{h}) + S(u_{h})(z_{h})+ 
\mathcal{R}_{J}\,.
\end{split}
\end{equation}
Finally, we note that 
\begin{eqnarray}
\nonumber
& & \frac{1}{2} \mathcal{S}(x_{h})(y_{h} -x_{h}) + 
S(u_{h})(z_{h})\\[1ex]
\nonumber
& & = \frac{1}{2} S(u_{h})(\varphi_{h}-z_{h}) + \frac{1}{2} 
S^\ast(z_{h})(\psi_{h}-u_{h}) 
+ S(u_{h})(z_{h})\\[1.5ex] 
& & = \frac{1}{2} S(u_{h})(\varphi_{h}+z_{h}) + \frac{1}{2} 
S^\ast(z_{h})(\psi_{h}-u_{h})
\,.
\label{BruchhaeuserThm:J7}
\end{eqnarray}
Combining \eqref{BruchhaeuserThm:J6} with \eqref{BruchhaeuserThm:J7} proves the 
assertion of the theorem. 
\end{proof}
In the error respresentation \eqref{BruchhaeuserThm:J1} the continuous solution 
$u$ is required for the evaluation of the adjoint residual. In the following theorem we 
show that the adjoint residual coincides with the primal residual up to a quadratic 
remainder. This observation will be used below to find our final error respresentation in 
terms of the goal quantity $J$ and a suitable linearization for its computational 
evaluation or approximation, respectively.  
\begin{theorem}
\label{BruchhaeuserThm:ResDev}
Under the assumptions of Thm.~\ref{BruchhaeuserThm:J}, and with the definitions 
\eqref{BruchhaeuserDef:primalresidual} and \eqref{BruchhaeuserDef:dualresidual} of the 
primal and dual residual, respectively, there holds that 
\begin{equation*}
\begin{split}
\rho^\ast(z_{h}) (u-\psi_{h}) & = \rho(u_{h}) (z - \varphi_{h}) + 
S(u_{h})(\varphi_{h}-z_{h}) 
+ S^\ast (z_{h})(u_{h}-\psi_{h}) + \Delta \rho_{J}\,,
\end{split}
\end{equation*}
for all $\{\psi_{h},\varphi_{h}\}\in V_h^p \times V_h^p$, where the remainder term is 
given by
\begin{equation}
\label{BruchhaeuserThm:ResDev2}
\Delta \rho_{J}:=  - \int_0^1 J''(u_{h} + s \cdot e)(e,e) \,\mathrm{d} s\,,
\end{equation}
with $e:=u - u_{h}$.
\end{theorem}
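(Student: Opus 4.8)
The plan is to establish the identity by purely algebraic manipulation of the two residuals, using three ingredients: the discrete primal and dual equations, which convert residuals tested against discrete functions into the corresponding stabilization terms; the continuous primal and dual equations; and the adjoint identity $A^\ast(z)(\psi)=A(\psi)(z)$ recorded in \eqref{BruchhaeuserEq:weakbilinearformdual}. No new analytical tool is needed beyond the fundamental theorem of calculus; the work is bookkeeping.

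First I would record two Galerkin-type relations. From the discrete primal problem \eqref{BruchhaeuserEq:ASprimal} and the definition \eqref{BruchhaeuserDef:primalresidual} of $\rho$ one obtains $\rho(u_{h})(\varphi_{h}) = S(u_{h})(\varphi_{h})$ for all $\varphi_{h}\in V_h^p$; likewise, since $V_h^p\subset V_h^{p+s}$, the discrete dual problem \eqref{BruchhaeuserEq:ASdual} together with \eqref{BruchhaeuserDef:dualresidual} yields $\rho^\ast(z_{h})(\psi_{h}) = S^\ast(z_{h})(\psi_{h})$ for all $\psi_{h}\in V_h^p$. These hold in particular for the test functions $\varphi_{h},\psi_{h}$ of the theorem and also for the choices $\varphi_{h}=z_{h}$ and $\psi_{h}=u_{h}$, since $u_{h},z_{h}\in V_h^p$.

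Next I would split $\rho^\ast(z_{h})(u-\psi_{h}) = \rho^\ast(z_{h})(u) - \rho^\ast(z_{h})(\psi_{h})$ and $\rho(u_{h})(z-\varphi_{h}) = \rho(u_{h})(z) - \rho(u_{h})(\varphi_{h})$, apply the Galerkin relations to the discrete arguments, and additionally insert $\rho(u_{h})(z_{h}) = S(u_{h})(z_{h})$ and $\rho^\ast(z_{h})(u_{h}) = S^\ast(z_{h})(u_{h})$. Rearranging, all the asserted stabilization contributions $S(u_{h})(\varphi_{h}-z_{h})$ and $S^\ast(z_{h})(u_{h}-\psi_{h})$ appear, and what remains to be identified with $\Delta\rho_{J}$ is the quantity $\rho^\ast(z_{h})(u-u_{h}) - \rho(u_{h})(z-z_{h})$.

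For this last step, which carries the actual content, I would compute directly with $e=u-u_{h}$: using bilinearity of $A$ and the continuous primal equation $A(u)(\cdot)=F(\cdot)$ on $V$ to rewrite $F(z-z_{h})$ as $A(u)(z-z_{h})$, and using the adjoint identity, the two occurrences of $A(e)(z_{h})$ cancel and one is left with $\rho^\ast(z_{h})(u-u_{h}) - \rho(u_{h})(z-z_{h}) = J'(u_{h})(e) - A^\ast(z)(e)$. The $u$-component \eqref{BruchhaeuserEq:ucomp} of the stationarity condition, i.e.\ $A^\ast(z)(\psi) = J'(u)(\psi)$ for all $\psi\in V$, then turns this into $J'(u_{h})(e) - J'(u)(e)$, and applying the fundamental theorem of calculus to $s\mapsto J'(u_{h}+s\cdot e)(e)$ gives exactly $-\int_0^1 J''(u_{h}+s\cdot e)(e,e)\,\mathrm{d}s = \Delta\rho_{J}$. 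Collecting all pieces proves the assertion. The only genuinely delicate point is keeping track of the signs and of the numerous stabilization terms; the analytic input is elementary once the continuous and discrete equations and the adjoint identity are in hand.
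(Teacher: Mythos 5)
Your proof is correct and follows essentially the same route as the paper: the same Galerkin relations $\rho(u_{h})(\varphi_{h})=S(u_{h})(\varphi_{h})$ and $\rho^\ast(z_{h})(\psi_{h})=S^\ast(z_{h})(\psi_{h})$, the adjoint identity $A^\ast(z)(\psi)=A(\psi)(z)$, the continuous primal and dual equations, and the fundamental theorem of calculus producing $\Delta\rho_{J}$, all reducing to the core identity $\rho^\ast(z_{h})(e)-\rho(u_{h})(z-z_{h})=\Delta\rho_{J}$. The only cosmetic difference is that the paper applies the fundamental theorem to the combined function $k(s)=J'(u_{h}+s e)(\cdot)-A^\ast(z_{h}+s e^\ast)(\cdot)$ and then rewrites the resulting term $A^\ast(e^\ast)(e)$ as the primal residual, whereas you cancel the bilinear-form contributions first and integrate only $s\mapsto J'(u_{h}+s\cdot e)(e)$, which is a slightly tidier arrangement of the same bookkeeping.
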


\begin{proof}
Let $e:=u - u_{h}$ and $e^\ast:=z -z_{h}$ denote the primal and dual error, 
respectively. For arbitrary $\psi_{h}\in V_h^p$ we put
\begin{equation*}
k(s) := J'(u_{h} + s \cdot e)(u-\psi_{h})- A^\ast(z_{h}+s 
\cdot e^\ast)(u-\psi_{h}) \,.
\end{equation*}
We have that 
\begin{equation*}
k(1) :=J'(u)(u-\psi_{h}) - A^\ast(z)(u-\psi_{h}) = 0\,.
\end{equation*}
From \eqref{BruchhaeuserDef:dualresidual} we get that
\begin{equation*}
k(0) = J'(u_{h})(u - \psi_{h}) - A^\ast(z_{h})(u-\psi_{h})
= \rho^\ast (z_{h}) (u-\psi_{h})\,. 
\end{equation*}
Further, we conclude that 
\begin{equation*}
k'(s) =  J''(u_{h} + s\cdot e)(e,u-\psi_{h}) - A^\ast(e^\ast)(u-\psi_{h})\,.
\end{equation*}
Using \eqref{BruchhaeuserEq:ASdual} and \eqref{BruchhaeuserDef:dualresidual} we 
find that 
\begin{eqnarray}
\nonumber
 \rho^\ast (z_{h}) (u-\psi_{h})  &=& \; J'(u_{h})(u - \psi_{h}) - 
A^\ast(z_{h})(u-\psi_{h}) + S^\ast (z_{h})(\psi_{h}) - S^\ast (z_{h})(\psi_{h}) 
\\[1ex]
\nonumber
& & - J'(u_{h})(u_{h}) + A^\ast(z_{h})(u_{h}) + 
S^\ast(z_{h})(u_{h}) \\[1.5ex]\nonumber 
&=& \;  \rho^\ast (z_{h}) (u-u_{h}) +  S^\ast(z_{h})(u_{h}-\psi_{h})\\[1ex]
\label{BruchhaeuserThm:ResDev5}
&=&  \; \rho^\ast (z_{h}) (e) +  S^\ast (z_{h})(u_{h}-\psi_{h})\,.
\end{eqnarray}
From \eqref{BruchhaeuserThm:ResDev5} along with the theorem of calculus 
$\int\limits_0^1 k'(s) \,\mathrm{d} s = k(1)-k(0)$ it follows that 
\begin{eqnarray}
\nonumber 
 \rho^\ast (z_{h})(u-\psi_{h}) &=& \rho^\ast (z_{h})(e)  + S^\ast 
(z_{h})(u_{h}-\psi_{h})
\\[1ex]\nonumber
&=& \; k(0)-k(1)+ S^\ast (z_{h})(u_{h}-\psi_{h}) \\[1ex] 
\nonumber
&=& \; \int_0^1 \Big(A^\ast(e^\ast)(e) + 
 - J''(u_{h} + s \cdot e)(e,e) 
\Big)\,\mathrm{d}s+ S^\ast(z_{h})(u_{h}-\psi_{h})\\[1ex]
\label{BruchhaeuserThm:ResDev6}
&=& \; A^\ast(e^\ast)(e)  
+ S^\ast(z_{h})(u_{h}-\psi_{h}) + \Delta \rho_{J}\,.
\end{eqnarray}
Next, for the first term on the right-hand side of 
\eqref{BruchhaeuserThm:ResDev6} we get that 
\begin{eqnarray}
\nonumber
A^\ast(e^\ast)(e) &=& 
\; (\varepsilon \nabla e^\ast, \nabla e) -(\boldsymbol{b}\cdot \nabla e^\ast,e)
+(\alpha e^\ast, e) 
 \\[1ex]\nonumber
&=& \; (\varepsilon \nabla e, \nabla e^\ast)+(\boldsymbol{b}\cdot \nabla e, e^\ast) 
+ (\alpha e, e^\ast)\\[1ex]\nonumber
&=& \; F(e^\ast)  - A(u_{h})(e^\ast) \\[1ex]
\label{BruchhaeuserThm:ResDev7}
&=& \; \rho(u_{h})(z-z_{h}) = \rho(u_{h})(z-\varphi_{h}) + 
S(u_{h})(\varphi_{h}-z_{h})\,,
\end{eqnarray}
for all $\varphi_{h}\in V_h^p$. Combining \eqref{BruchhaeuserThm:ResDev6} with 
\eqref{BruchhaeuserThm:ResDev7} yields that
\[
\begin{split}
\rho^\ast (z_{h})(u-\psi_{h}) & = \rho(u_{h})(z-\varphi_{h}) + 
S(u_{h})(\varphi_{h}-z_{h})
+ S^\ast (z_{h})(u_{h}-\psi_{h}) + \Delta \rho_{J}\,,
\end{split}
\]
for all $\{\psi_{h},\varphi_{h}\}\in V_h^p \times V_h^p$ with 
$\Delta \rho_{J}$ being defined by 
\eqref{BruchhaeuserThm:ResDev2}. This proves the assertion of the theorem.
\end{proof}
We summarize the results of the previous two theorems in the following corollary.
\begin{corollary}
\label{BruchhaeuserCor:J}
Under the assumptions of Thm.~\ref{BruchhaeuserThm:J} with the 
definitions \eqref{BruchhaeuserDef:primalresidual} and 
\eqref{BruchhaeuserDef:dualresidual} of the primal and dual residual, respectively, there 
holds the error representation that
 \begin{equation}
\label{BruchhaeuserCor:J1}
J(u) - J(u_{h}) = \rho(u_{h}) (z - \varphi_{h}) + S(u_{h})(\varphi_{h}) + 
\mathcal{R}_{J} + \frac{1}{2} \Delta \rho_{J}\,,
\end{equation}
for arbitrary functions $\varphi_{h} \in V_h^p$, where the remainder term 
$\mathcal{R}_{J}$ is given by \eqref{BruchhaeuserThm:J3} and the linearization 
error $\Delta \rho_{J}$ is defined by \eqref{BruchhaeuserThm:ResDev2}. 
\end{corollary}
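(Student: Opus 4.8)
The plan is to obtain \eqref{BruchhaeuserCor:J1} by a purely algebraic combination of the two preceding theorems; no new analytic ingredient is required. First I would invoke Theorem~\ref{BruchhaeuserThm:J}, which already supplies the representation
\[
J(u)-J(u_{h}) = \tfrac12\rho(u_{h})(z-\varphi_{h}) + \tfrac12\rho^\ast(z_{h})(u-\psi_{h}) + \mathcal{R}_{\mathcal{S}} + \mathcal{R}_{J}
\]
for arbitrary $\{\psi_{h},\varphi_{h}\}\in V_h^p \times V_h^p$, with $\mathcal{R}_{\mathcal{S}} = \tfrac12 S(u_{h})(\varphi_{h}+z_{h}) + \tfrac12 S^\ast(z_{h})(\psi_{h}-u_{h})$. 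The dual residual $\rho^\ast(z_{h})(u-\psi_{h})$ still involves the unknown continuous solution $u$; the idea is to eliminate it by substituting the identity established in Theorem~\ref{BruchhaeuserThm:ResDev}, namely
\[
\rho^\ast(z_{h})(u-\psi_{h}) = \rho(u_{h})(z-\varphi_{h}) + S(u_{h})(\varphi_{h}-z_{h}) + S^\ast(z_{h})(u_{h}-\psi_{h}) + \Delta\rho_{J}\,.
\]

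Second, I would perform this substitution and sort the resulting terms by type. The two contributions $\tfrac12\rho(u_{h})(z-\varphi_{h})$ merge into a single $\rho(u_{h})(z-\varphi_{h})$; the remainder $\mathcal{R}_{J}$ is carried through unchanged; the linearization term yields $\tfrac12\Delta\rho_{J}$; and the stabilization pieces regroup into
\[
\tfrac12 S(u_{h})(\varphi_{h}-z_{h}) + \tfrac12 S(u_{h})(\varphi_{h}+z_{h}) + \tfrac12 S^\ast(z_{h})(u_{h}-\psi_{h}) + \tfrac12 S^\ast(z_{h})(\psi_{h}-u_{h})\,.
\]
The only point requiring (mild) care here is that $S(u_{h})(\cdot)$ and $S^\ast(z_{h})(\cdot)$ are linear in their test arguments, which is immediate from the definitions in Subsect.~\ref{BruchhaeuserSubSec:streamline} since each is a finite sum of inner products of the form $\delta_K(R(u_{h}),\boldsymbol{b}\cdot\nabla(\cdot))_K$. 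Using this linearity, the two $S^\ast$ terms cancel exactly and the two $S$ terms combine to $S(u_{h})(\varphi_{h})$, leaving precisely \eqref{BruchhaeuserCor:J1}.

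The argument is therefore essentially bookkeeping, and I do not expect a genuine obstacle. The only place to stay attentive is the sign handling in the stabilization terms (the $\varphi_{h}\pm z_{h}$ and $u_{h}\mp\psi_{h}$ combinations); this is exactly the cancellation already carried out in \eqref{BruchhaeuserThm:J7} within the proof of Theorem~\ref{BruchhaeuserThm:J}, now augmented by the extra $\tfrac12 S(u_{h})(\varphi_{h}-z_{h})$ imported from Theorem~\ref{BruchhaeuserThm:ResDev}. Since the remainder $\mathcal{R}_{J}$ and the linearization error $\Delta\rho_{J}$ retain their definitions \eqref{BruchhaeuserThm:J3} and \eqref{BruchhaeuserThm:ResDev2}, nothing further needs to be checked about them, and the corollary follows.
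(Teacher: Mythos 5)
Your proposal is correct and matches the paper's intent exactly: the corollary is stated as a summary of Theorems~\ref{BruchhaeuserThm:J} and~\ref{BruchhaeuserThm:ResDev}, and the substitution plus the cancellation of the $S^\ast$ terms and the merging of the $S$ terms via linearity is precisely the bookkeeping the authors have in mind. Nothing is missing.
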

In the final step we derive a localized or elementwise approximation of the 
error that is then used for the design of the adaptive algorithm.

\begin{theorem}[Localized error representation]
\label{BruchhaeuserThm:LocPresent}
Let the assumptions of Thm.~\ref{BruchhaeuserThm:J} be satisfied.
Neglecting the higher order error terms in \eqref{BruchhaeuserCor:J1}, 
then there holds as a linear approximation the cell-wise error representation
\begin{equation}
\label{BruchhaeuserEq:localER}
J(u) - J(u_{h})  \doteq   \sum\limits_{K\in\mathcal{T}_h} \Big\{ 
\big(\mathcal{R}(u_{h}), z-\varphi_{h}\big)_{K} 
- \delta_K\big(\mathcal{R}(u_{h}), \boldsymbol{b}\cdot\nabla\varphi_{h}\big)_{K} 
- \big(\mathcal{E}(u_{h}), z-\varphi_{h}\big)_{\partial K}\Big\}\,.
\end{equation}
The cell- and edge-wise residuals are defined by 
\begin{eqnarray}
\label{Bruchhaeusereq:42} \mathcal{R}(u_{h})_{|K} &:=& f + \nabla\cdot(\varepsilon\nabla 
u_{h})
- \boldsymbol{b}\cdot\nabla u_{h} -\alpha u_{h} \,,\\[0.5ex]
\label{Bruchhaeusereq:43} \mathcal{E}(u_{h})_{|\Gamma} &:=& \left\{ \begin{array}{cl} 
\frac{1}{2}\boldsymbol{n}\cdot[\varepsilon\nabla u_{h}] & 
\mbox{ if } \Gamma\subset\partial K\backslash\partial\Omega\,, \\[0.5ex] 
0 & 
\mbox{ if } \Gamma\subset\partial\Omega\,,\\ \end{array}\right.
\end{eqnarray}
where $[\nabla u_{h}]:= \nabla u_{h}{}_{|\Gamma\cap K}
-\nabla u_{h}{}_{|\Gamma\cap K'}$ defines the jump of $\nabla u_{h}$ 
over the inner edges $\Gamma$ with normal unit vector 
$\boldsymbol{n}$ pointing from $K$ to $K'$. 
\end{theorem}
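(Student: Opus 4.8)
The plan is to start from the error representation in Corollary~\ref{BruchhaeuserCor:J}, namely
\[
J(u) - J(u_{h}) = \rho(u_{h}) (z - \varphi_{h}) + S(u_{h})(\varphi_{h}) + \mathcal{R}_{J} + \tfrac{1}{2} \Delta \rho_{J}\,,
\]
and to discard the two higher-order terms $\mathcal{R}_{J}$ and $\tfrac{1}{2}\Delta\rho_{J}$ (each of which is quadratic or cubic in the primal error $e=u-u_h$, hence asymptotically negligible); this is the meaning of the symbol $\doteq$. What remains to be shown is therefore purely a matter of rewriting the two computable-looking terms $\rho(u_{h})(z-\varphi_{h})$ and $S(u_{h})(\varphi_{h})$ in elementwise form. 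The term $S(u_{h})(\varphi_{h})$ is already a sum over $K\in\mathcal{T}_h$ by its very definition in Subsect.~\ref{BruchhaeuserSubSec:streamline}, equal to $\sum_K \delta_K\big(R(u_h),\boldsymbol{b}\cdot\nabla\varphi_h\big)_K$; noting that $R(u_h) = -\mathcal{R}(u_h)$ with $\mathcal{R}(u_h)$ as in \eqref{Bruchhaeusereq:42}, this contributes exactly the middle term $-\delta_K(\mathcal{R}(u_h),\boldsymbol{b}\cdot\nabla\varphi_h)_K$ of \eqref{BruchhaeuserEq:localER}.

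The real work is in the primal residual term. First I would write $\rho(u_h)(z-\varphi_h) = F(z-\varphi_h) - A(u_h)(z-\varphi_h)$ using \eqref{BruchhaeuserDef:primalresidual}, then split the integrals over $\Omega$ into a sum over the elements $K\in\mathcal{T}_h$ (legitimate since $u_h$ is piecewise polynomial). On each $K$ I would integrate by parts in the diffusion term $(\varepsilon\nabla u_h,\nabla(z-\varphi_h))_K$, producing the interior term $-(\nabla\cdot(\varepsilon\nabla u_h), z-\varphi_h)_K$ plus a boundary contribution $(\varepsilon\nabla u_h\cdot\boldsymbol{n}, z-\varphi_h)_{\partial K}$. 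Collecting the interior contributions on each $K$ gives
\[
\big(f + \nabla\cdot(\varepsilon\nabla u_h) - \boldsymbol{b}\cdot\nabla u_h - \alpha u_h,\, z-\varphi_h\big)_K = \big(\mathcal{R}(u_h), z-\varphi_h\big)_K\,,
\]
which is the first term of \eqref{BruchhaeuserEq:localER}. The boundary terms must then be reassembled over all $K$: each interior edge $\Gamma$ is shared by two neighbouring cells $K$ and $K'$ whose outward normals are opposite, so the two copies of $(\varepsilon\nabla u_h\cdot\boldsymbol{n},\, z-\varphi_h)$ combine into a single jump term $(\boldsymbol{n}\cdot[\varepsilon\nabla u_h],\, z-\varphi_h)_\Gamma$; the weight $z-\varphi_h\in V\cap C(\bar\Omega)$ is single-valued across $\Gamma$, so no jump of the weight appears. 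Distributing this jump term half-and-half to the two adjacent cells (which is the standard bookkeeping convention, and explains the factor $\tfrac12$ in the definition \eqref{Bruchhaeusereq:43} of $\mathcal{E}(u_h)$) and observing that on boundary edges $\Gamma\subset\partial\Omega$ the weight $z-\varphi_h$ vanishes (both $z$ and $\varphi_h$ satisfy the homogeneous Dirichlet condition), one obtains the edge term $-\big(\mathcal{E}(u_h), z-\varphi_h\big)_{\partial K}$, summed over $K$. Adding the three pieces yields \eqref{BruchhaeuserEq:localER}, completing the proof.

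The step I expect to demand the most care is the reassembly and redistribution of the face terms: one has to be scrupulous about the sign of the normal vector and the direction in which the jump $[\varepsilon\nabla u_h]$ is taken (here $\boldsymbol{n}$ points from $K$ to $K'$, matching the convention fixed in the statement), and about the fact that summing $(\cdot)_{\partial K}$ over all $K$ counts each interior edge twice — which is precisely why the factor $\tfrac12$ is built into $\mathcal{E}(u_h)$ so that the final elementwise expression $\sum_K (\mathcal{E}(u_h), z-\varphi_h)_{\partial K}$ is consistent. Everything else — splitting $\Omega$-integrals into element integrals, integrating by parts on each $K$, and identifying $R(u_h) = -\mathcal{R}(u_h)$ — is routine. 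I would also remark that the continuity of the weight $z-\varphi_h$ across interior edges is what prevents any jump-of-the-weight contribution from appearing, and that the homogeneous boundary conditions are what kill the boundary-edge contributions; both facts should be stated explicitly even though they are elementary.
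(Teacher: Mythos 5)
Your proposal is correct and follows exactly the route of the paper's (very terse) proof: start from Corollary~\ref{BruchhaeuserCor:J}, drop the higher-order remainders $\mathcal{R}_J$ and $\tfrac12\Delta\rho_J$, and apply cell-wise integration by parts to the diffusion term of the primal residual; your additional bookkeeping on the sign $R(u_h)=-\mathcal{R}(u_h)$, the jump reassembly with the factor $\tfrac12$, and the vanishing of $z-\varphi_h$ on $\partial\Omega$ simply fills in the details the paper leaves implicit.
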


\begin{proof} 
The assertion directly follows from \eqref{BruchhaeuserCor:J1} by neglecting the 
higher order remainder terms $\mathcal R_{J}$ and $\Delta \rho_{J}$ as well as 
applying integration by parts on each cell $K\in \mathcal{T}_h$ to the diffusion  
term in the primal residual \eqref{BruchhaeuserDef:primalresidual}. 
\end{proof}

\begin{remark}
 \label{BruchhaeuserRem:InhomDirichletBC} (Nonhomogeneous Dirichlet and Neumann boundary 
conditions)
We briefly address the incorporation of further types of boundary conditions. First, we 
consider problem \eqref{BruchhaeuserEq:cdr} equipped with the nonhomogeneous 
Dirichlet condition
 \begin{equation*}
  u=g_D \;\; \text{on} \;\; \partial\Omega
 \end{equation*}
for a given function $g\in H^{\frac{1}{2}}(\partial\Omega)\,.$ For this, let 
$\tilde{g}_D\in H^1(\Omega)$ be an extension of $g_D$ in the sense that the trace 
of $\tilde{g}_D$ equals $g_D$ on $\partial\Omega$. Further, let the 
discrete function $\tilde{g}_{D,h}$ be an appropriate finite element approximation of the 
extension $\tilde{g}_D$. Then, the trace on $\partial\Omega$ 
of $\tilde{g}_{D,h}$ represents a discretization of $g_D$. For instance, a nodal 
interpolation of $g_D$ and an extension in the finite element space can be used. This 
allows us to recast the weak form of problem \eqref{BruchhaeuserEq:cdr} and its 
discrete counterpart in terms of $w=u-\tilde{g}_D\in H^1_0(\Omega)$ and $w_h = u_h 
-\tilde{g}_{D,h}\in V_h^p \subset H^1_0(\Omega)$. The previous calculations and 
the derivation of the a posteriori error estimator are then done for the weak problem and 
its discrete counterpart rewritten in terms of $w$ and $w_h$. This yields the result that 
\begin{equation}
\label{BruchhaeuserEq:localERNonhom}
\begin{array}{r@{}l}
J(u) - J(u_{h})  \doteq &{}  \displaystyle\sum\limits_{K\in\mathcal{T}_h} \Big\{ 
\big(\mathcal{R}(u_{h}), z-\varphi_{h}\big)_{K} 
- \delta_K\big(\mathcal{R}(u_{h}), \boldsymbol{b}\cdot\nabla\varphi_{h}\big)_{K} \\[1ex]
&{} - \big(\mathcal{E}(u_{h}), z-\varphi_{h}\big)_{\partial K}\Big\}
-\big((g_D-\tilde{g}_{D,h}),\varepsilon\nabla z\cdot\boldsymbol{n}\big)_{\partial\Omega}\,,
\end{array}
\end{equation}
where $\mathcal{R}(u_{h})$ and $\mathcal{E}(u_{h})$ are given by 
\eqref{Bruchhaeusereq:42} and \eqref{Bruchhaeusereq:43}, respectively. The result of 
Thm.~\ref{BruchhaeuserThm:LocPresent} is thus extended by the last term of 
\eqref{BruchhaeuserEq:localERNonhom}. If a homogeneous Neumann condition is prescribed on 
a part $\partial \Omega_N$ of the boundary $\partial \Omega =\partial \Omega_D \cup 
\partial \Omega_N$, with Dirichlet part $\partial \Omega_D$, then the derivation has to 
be done analogously for the solution space $V=\{v\in H^1(\Omega) \mid v=0 \text{ on } 
\Gamma_D\}$ and its discrete counterpart and the resulting variational problems.
\end{remark}

\section{Practical Aspects}
\label{BruchhaeuserSec:practical}

In this section we present practical aspects regarding the application of the 
result given in  Thm.~\ref{BruchhaeuserThm:LocPresent} in computational studies of 
convection-dominated problems. In particular, we present our mesh adaptation 
strategy that is based on \eqref{BruchhaeuserEq:localER}. We note that the concepts 
described here can be generalized to nonstationary problems; cf.~\cite{BruchhaeuserKB17}.

The error representation \eqref{BruchhaeuserEq:localER}, rewritten as 
\begin{eqnarray}
\nonumber
J(u) - J(u_{h}) & \doteq & \sum\limits_{K\in\mathcal{T}_h} \Big\{ 
\big(\mathcal{R}(u_{h}), z-\varphi_{h}\big)_{K} 
- \delta_K\big(\mathcal{R}(u_{h}), \boldsymbol{b}\cdot\nabla\varphi_{h}\big)_{K} 
- \big(\mathcal{E}(u_{h}), z-\varphi_{h}\big)_{\partial K}\Big\} \\
\nonumber
& &  \\
& = & \eta :=\sum\limits_{K\in\mathcal{T}_h}\eta_K\,,
\label{BruchhaeuserEq:DefEta}
\end{eqnarray}
depends on the discrete primal solution~$u_h$ as well as on the exact dual 
solution~$z$. For the application  of \eqref{BruchhaeuserEq:DefEta} in computations, the 
unknown dual solution~$z$ has to be approximated which results in an approximate error 
indicator $\tilde{\eta}$. As noted before, and as a consequence of 
\eqref{BruchhaeuserCor:J1} and \eqref{BruchhaeuserEq:localER}, respectively, the 
approximation of the dual solution cannot be done in the same finite element space as 
used for the primal problem, since this would result in an useless vanishing error 
representation $\tilde{\eta}=0$, 
due to the Galerkin orthogonality. As a key ingredient of this work, for the numerical 
approximation of the dual solution $z\in V$ we use a finite element approach 
that is of higher polynomial order than the one of the discretization of the primal 
problem. Again, to overcome the difficulties associated with the convection-dominance  
the SUPG method \eqref{BruchhaeuserEq:ASdual} is here applied to the discrete dual 
problem 
as well. Thus, we determine an approximation $z_h$ of the the dual solution $z\in V$ in 
the space $V_h^{p+s}$ for some $s \geq 1$. In contrast 
to many other works of the 
literature we thus use a higher order approach for the approximation of the dual 
solution, i.e.\ $z_h \in V_h^{p+s}$ with $s\geq 1$ whereas $u_h\in V_h^p$, which however 
leads to higher computational costs; cf.~\cite{BruchhaeuserBK15, BruchhaeuserK15} for 
algorithmic formulations and analyses. In the literature, the application of higher 
order interpolation instead of usage of higher order finite element spaces is often 
suggested for the DWR approach; cf.~\cite{BruchhaeuserBR03, BruchhaeuserBGR10}. For 
convection-dominated problems such an interpolation might be defective
and lead to tremendous errors close to sharp layers and fronts. Higher order
techniques show more stability and reduce spurious oscillations 
(cf.~\cite{BruchhaeuserBS12}) which is our key motivation for using a higher order 
approach for the approximation of the dual solution. We also refer to our remark in 
Sect.~\ref{BruchhaeuserSec:Intro} regarding the computational costs if the DWR is applied 
to nonlinear problems as they typically arise in applications of practical interest. 

In order to define the localized error contributions $\tilde{\eta}_K$ we 
consider a hierarchy of sequentially refined meshes $\mathcal{M}_i$, with 
$i \geq 1$ indexing the hierarchy. The corresponding finite element spaces are 
denoted by $V_h^{p+s,i}\,, s\geq 1\,,$ (cf. \eqref{Bruchhaeusereq:femspaces}) 
with the additional index $i$ denoting the mesh hierarchy. We calculate the 
cell-wise contributions to the linearized error representation 
\eqref{BruchhaeuserEq:DefEta} and \eqref{BruchhaeuserEq:localER}, respectively, 
by means of
\begin{equation}
\label{BruchhaeuserEq:DefEtaKM}
\tilde{\eta}_K = 
\; \big(\mathcal{R}(u_{h}^{i}), z_h^{i}-\mathcal{I}_h 
z_h^{i}\big)_K  - \delta_K\big(\mathcal{R}(u_{h}^{i}), 
\boldsymbol{b}\cdot\nabla\mathcal{I}_h
z_h^{i}\big)_K
- \big(\mathcal{E}(u_{h}^{i}), z_h^{i}-\mathcal{I}_h 
z_h^{i}\big)_{\partial K} \,,
\end{equation}
where the cell and edge residuals are given in \eqref{Bruchhaeusereq:42} and 
\eqref{Bruchhaeusereq:43}, respectively. By $\mathcal{I}_h z_h^{i} \in V_h^{p,i}$ we 
denote the nodal based Lagrange interpolation of the higher order approximation 
$z_h^{i} \in V_h^{p+s,i}$, $s\geq 1$ into the lower order finite element space $V_h^p$. Our 
adaptive mesh refinement algorithm based on \eqref{BruchhaeuserEq:DefEtaKM} is summarized 
in the following. 

  \begin{center}
   \textbf{Adaptive solution algorithm (Refining and Coarsening)}
  \end{center}
  \textbf{Initialization} Set $i = 0$ and generate the initial finite element
  spaces for the primal and dual problem. 
  \begin{enumerate}
   \item Solve the \textbf{primal} problem:
   Find $u^i_h \in V^{p,i}_h$ such that 
   \begin{equation*}
   \hspace{-1.5cm} A_S(u^i_h)(\varphi_h) = F(\varphi_h)  \hspace{1.2cm} \forall 
\varphi_h\in V^{p,i}_h \,.
   \end{equation*}
   \item Solve the \textbf{dual} problem:
   Find $z^i_h \in V^{p+s,i}_h\supset V^{p,i}_h\,, s\geq 1\,,$ such that 
   \begin{equation*}
   A^{\ast}_S(z^i_h)(\psi_h) =  J(\psi_h) 
  \hspace{1.2cm} \forall \psi_h\in V^{p+s,i}_h\,, s\geq 1\,.
   \end{equation*}
   Here, $V^{p+s,i}_h$ denotes the finite element space of piecewise polynomials of 
higher order on the mesh $\mathcal{M}_i$.
   \item Evaluate the \textbf{a posteriori} error indicator
   \begin{eqnarray*}
   \tilde{\eta} & := & \sum_{K\in\mathcal{T}_h}
   \tilde{\eta}_K\,,\\
   \tilde{\eta}_K & = & \big( \mathcal{R}(u^i_H),z^i_h-\mathcal{I}_h z^i_h \big)_K
   - \delta_K \big( \mathcal{R}(u^i_H),\boldsymbol{b}\cdot \nabla\mathcal{I}_h z^i_h 
\big)_K 
- \big( \mathcal{E}(u^i_H),z^i_h-\mathcal{I}_h z^i_h \big)_{\partial K} \,,
   \end{eqnarray*}
  where the cell and edge residuals are given in \eqref{Bruchhaeusereq:42} and
  \eqref{Bruchhaeusereq:43}. By $u^i_H$ we denote the nodal based Lagrange interpolation 
of $u^i_h$ in $V_h^{p+s,i}$. Further, $z^i_h$ is the computed dual solution and 
$\mathcal{I}_h z^i_h$ is the interpolation of $z^i_h$ in the finite element 
space $V_h^{p,i}$ of the primal problem. 
  \item Histogram based refinement strategy:
  
  \textbf{Choose} $\theta \in (0.25,5)$. \textbf{Put}
  $\tilde{\eta}_{\text{max}}=\displaystyle\max_{K\in\mathcal{T}_h} |\tilde{\eta}_K|$ 
  and 
  \[ \mu = \theta\frac{\displaystyle\sum_{K\in\mathcal{T}_h}|\tilde{\eta}_K|}{\displaystyle{\# 
K}}\,.\] \\
  \textbf{While} $\mu > \tilde{\eta}_{\text{max}}$: 
  \begin{equation*}
   \mu := \frac{\mu}{2}\,.
  \end{equation*}
  Mark the elements $\tilde{K}$ with $|\tilde{\eta}_{\tilde{K}}| > \mu$ to be refined 
  and those two percent of the elements $\hat{K}$ that provide the smallest 
  contribution to $\tilde{\eta}$ to be coarsened. Generate a new mesh 
  $\mathcal{M}_{i+1}$ by regular coarsening and refinement.
  \item Check the stopping condition:
  
  If $\tilde{\eta}_{\text{max}} < \text{tol}$ or $\tilde{\eta} < \text{tol}$ is satisfied, then the  
  adaptive solution algorithm is terminated; Else, $i$ is increased to $i+1$ and it is 
jumped back to Step 1.
  \end{enumerate}
  
\begin{remark} Regarding the choice of the numerical tuning parameter $\theta$ in Step~4
of the previous algorithm we made the computational experience that a value of 
$\theta$ between 0.25 and 5 typically leads to good results. Further, we note that the 
performance properties of adaptive algorithms are strongly affected by the marking 
strategy. Here, marking is implemented in Step 4. 
We carefully analyzed the cell-wise distribution of the magnitude of the error indicators 
defined in Step 3 of the previous algorithm. The presented histogram based remeshing 
strategy of Step 4 yielded the best results. The so called \emph{D\"orfler}
marking (cf. \cite{BruchhaeuserD96}) or the marking of the largest local error 
indicators represent further popular marking strategies. For a further discussion of 
this issue we refer to, e.g.,~\cite{BruchhaeuserBR03}. 
\end{remark}
\begin{remark} According to the adaptive solution algorithm 
presented above, we use the same mesh for solving the primal and dual problem,
more precisely we use the same triangulation for both problems, but different 
polynomial degrees for the underlying shape functions of the respective finite
element space.
\end{remark}

For measuring the accuracy of the error estimator, we will 
study in our numerical experiments the effectivity index 
\begin{equation}
\label{BruchhaeuserEq:Ieff}
\mathcal{I}_{\mathrm{eff}} = \left|\frac{\tilde{\eta}}{J(u)-J(u_{h})}\right|
\end{equation}
as the ratio of the estimated error $\tilde{\eta}$ of 
\eqref{BruchhaeuserEq:DefEta} over the exact error. 
Desirably, the index $\mathcal{I}_{\mathrm{eff}}$ should be close to one.

\section{Numerical studies}
\label{BruchhaeuserSec:numerical}

In this section we illustrate and investigate the performance properties of
the proposed approach of combining the Dual Weighted Residual method with 
stabilized finite element approximations of convection-dominated problems.
We demonstrate the potential of the DWR method with regard to resolving solution 
profiles admitting sharp layers as they arise in convection-dominated problems. Further 
we investigate the mesh adaptation processes by prescribing various target functionals or 
goal quantities, respectively. For this, standard benchmark problems of the literature 
for studying the approximation of convection-dominated transport are applied. For 
the implementation and our numerical computations we use our \texttt{DTM++} frontend  
software \cite[Chapter 4]{BruchhaeuserK15} that is based on the open source finite 
element library \texttt{deal.II};  cf.~\cite{BruchhaeuserABDHHKMPTW,BruchhaeuserBHK07}. 

\subsection{Example 1 (Hump with circularly layer, 2d)}
\label{BruchhaeuserExample1}

In the first numerical experiment we focus on studying the accuracy of our error 
estimator and the impact of approximating the weights of the dual solution within 
the error indicators \eqref{BruchhaeuserEq:DefEtaKM}. For this, we consider different 
combinations of polynomial orders for the finite element spaces of the primal and dual 
solution. We study problem \eqref{BruchhaeuserEq:cdr} with the prescribed solution
(cf.~\cite{BruchhaeuserJS08,BruchhaeuserBS12,BruchhaeuserAJ15})
\begin{equation}
\label{BruchhaeuserEq:movghump}
u(\boldsymbol{x}) = 
16x_1(1-x_1)x_2(1-x_2)
\cdot \bigg\{\frac{1}{2} + 
\dfrac{\arctan
\big( 
2\varepsilon^{-1/2}\big[r_0^2 - (x_1-x_1^0)^2 - (x_2-x_2^0)^2\big] 
\big)}{\pi} \bigg\}\,.
\end{equation}
where $\Omega := (0,1)^2$ and $z_0 = 0.25$, $x_1^0 = x_2^0 = 0.5$. 
We choose the parameter $\varepsilon = 10^{-6}$, $\boldsymbol{b} = (2,3)^\top$ and $\alpha = 
1.0$. 
For the solution \eqref{BruchhaeuserEq:movghump} the right-hand side function $f$ is 
calculated from the partial differential equation. Boundary conditions 
are given by the exact solution. Our target quantity is chosen as 
\begin{equation}
\label{BruchhaeuserEq:HumpTarget}
J(u) = \frac{1}{\Vert e\Vert_{L^2(\Omega)}} (e,u)_{\Omega}\,.
\end{equation}
In Fig.~\ref{BruchhaeuserFig:MH_L2dofs} we compare the convergence behavior of the 
proposed DWR approach with a global mesh refinement strategy. The corresponding 
solution profiles are visualized in Fig.~\ref{BruchhaeuserFig:MHsol}. The adaptively 
generated mesh is presented in Fig.~\ref{BruchhaeuserFig:MeshMH}. The DWR based adaptive 
mesh adaptation is clearly superior to the global refinement in terms of accuray over 
degrees of freedom. While the globally refined solution is still 
perturbed by undesired oscillations within the circular layer and behind 
the hump in the direction of the flow field $\boldsymbol{b}$, the adaptively computed solution 
exhibits an almost perfect solution profile for even less degrees of freedom. 

\begin{figure}
\centering
\subfloat[$L^2$-error over degrees of freedom for global and DWR adaptive 
mesh refinement.]{
  \centering
\begin{minipage}{.45\linewidth}
\centering
\begin{tikzpicture}
\begin{axis}[%
width=2.0in,
height=1.1in,
scale only axis,
/pgf/number format/.cd, 1000 sep={},
xmode=log,
ymode=log,
yminorticks=true,
legend style={legend pos=south west},
legend style={draw=black,fill=white,legend cell align=left},
legend entries = {
  {\textbf{gl} $Q_1$},
  {\textbf{ad} $Q_1/Q_2$},
  {\textbf{ad} $Q_1/Q_3$},
  {\textbf{ad} $Q_1/Q_4$},
}
]

\addplot [
color=black,
solid,
line width=1.0pt,
mark=+,
mark size = 1.5,
mark options={solid,black}
]
table[row sep=crcr]{
81      2.86363e-01 \\ 
289     2.59459e-01 \\ 
1089    9.83539e-02 \\ 
4225    5.65332e-02 \\ 
16641   2.28166e-02 \\ 
66049   1.05549e-02 \\ 
263169  4.63511e-03 \\ 
1050625 1.92367e-03 \\
};

\addplot [
color=black,
dashed,
line width=1.0pt,
mark=square*,
mark size = 1.5,
mark options={solid,black}
]
table[row sep=crcr]{
81     2.86363e-01 \\ 
206    2.52635e-01 \\ 
608    1.04744e-01 \\ 
1516   5.86416e-02 \\ 
3623   2.78384e-02 \\ 
7290   1.41160e-02 \\ 
13902  6.83149e-03 \\ 
25232  3.14782e-03 \\ 
49691  1.31823e-03 \\ 
111871 4.82777e-04 \\ 
295106 1.63420e-04 \\ 
842390 6.26958e-05 \\ 
};
\end{axis}
\end{tikzpicture}
\end{minipage}
\label{BruchhaeuserFig:MH_L2dofs}
}
\hfill
\subfloat[Adaptive mesh for target quantity \eqref{BruchhaeuserEq:HumpTarget} with 56222 
degrees of freedom.]{
\centering
\begin{minipage}{.45\linewidth}
\centering
\includegraphics[width=3.3cm]{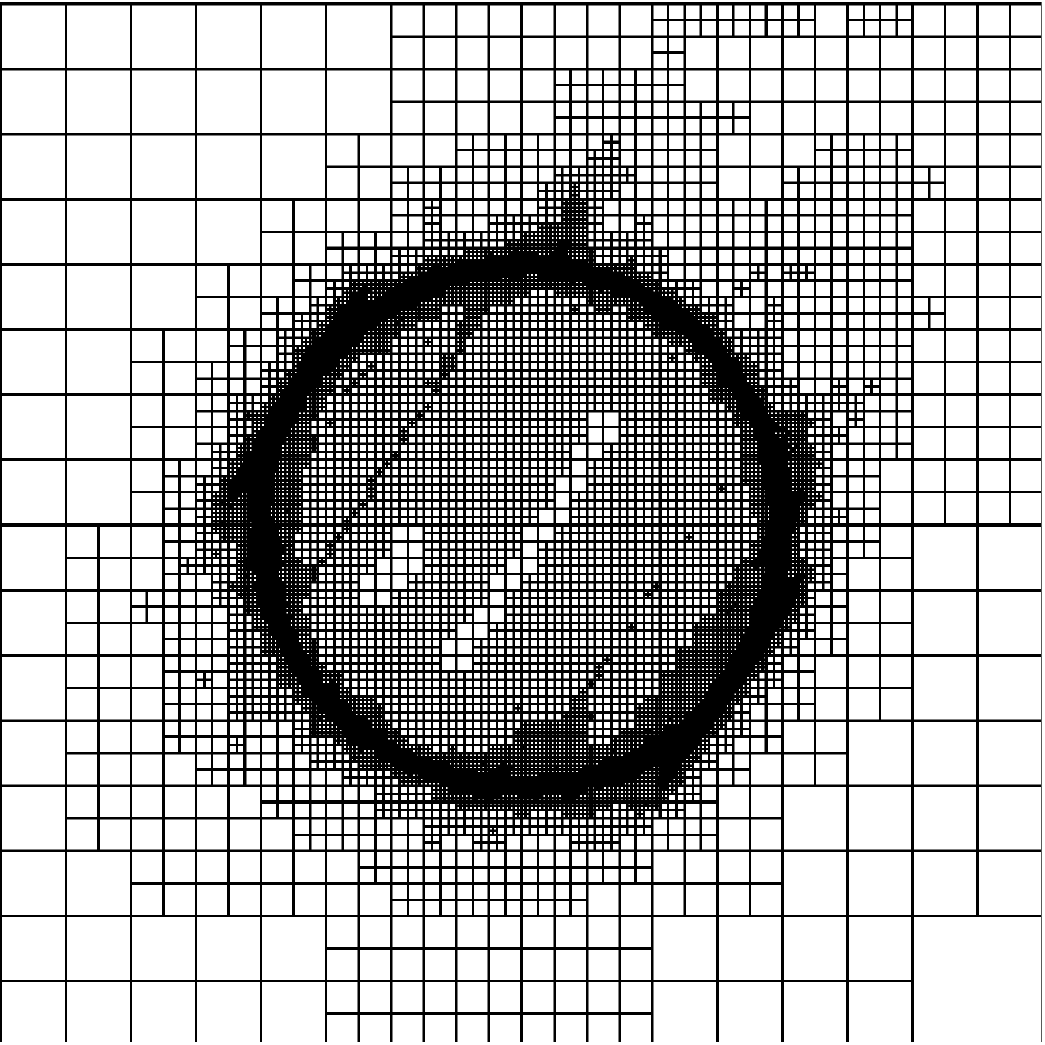}
\end{minipage}
\label{BruchhaeuserFig:MeshMH}
}
\caption{Comparison of $L^2$-errors and visualization of the adaptive mesh 
for Example \ref{BruchhaeuserExample1}.}
\label{BruchhaeuserFig:MHL2andMesh}
\end{figure}

\begin{figure}
\centering
\subfloat[Global Refinement.]{
  \centering
\begin{minipage}{.45\linewidth}
\centering
\includegraphics[width=5.8cm]{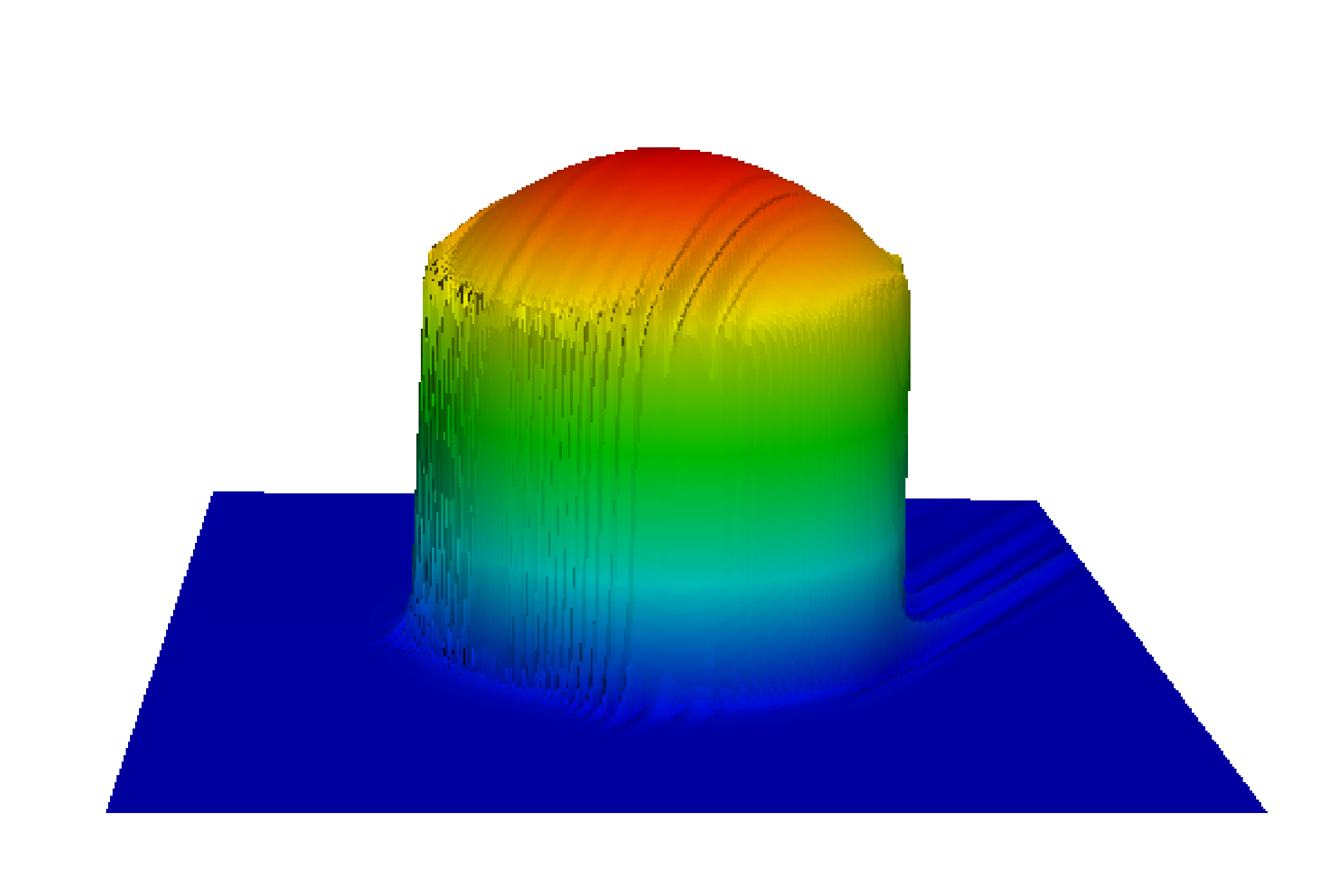}
\end{minipage}
\label{BruchhaeuserFig:GR}
}
\hfill
\subfloat[Adaptive Refinement.]{
  \centering
\begin{minipage}{.45\linewidth}
\centering
\includegraphics[width=5.8cm]{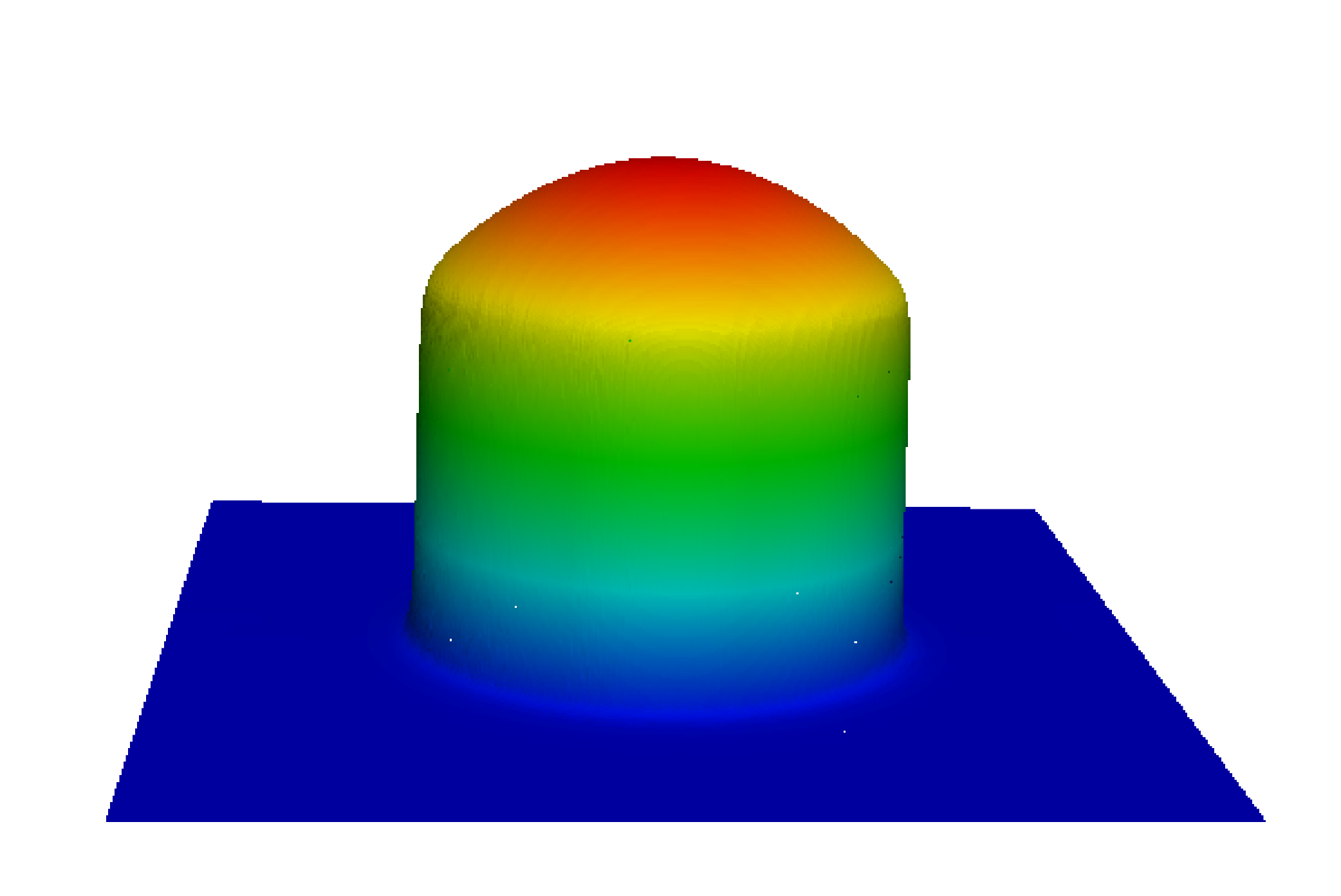}
\end{minipage}
\label{BruchhaeuserFig:AR}
}
\caption{Stabilized solution profile on a globally refined mesh with 66049 
degrees of freedom (\ref{BruchhaeuserFig:GR}) and on an adaptively refined mesh with error control 
by the target quantity \eqref{BruchhaeuserEq:HumpTarget} with 56222 degrees of 
freedom (\ref{BruchhaeuserFig:AR}) for Example \ref{BruchhaeuserExample1}.}
\label{BruchhaeuserFig:MHsol}
\end{figure}


In Fig.~\ref{BruchhaeuserFig:IeffMH} we present the calculated effectivity indices 
\eqref{BruchhaeuserEq:Ieff} for solving the primal and dual problem in different 
pairs of finite element spaces based on the $Q_k$ element of polynomials of maximum 
degree $k$ in each variable. Considering the 
$Q_1$ based approximation of the primal problem, we note that by increasing the 
polynomial degree of the dual solution from $Q_2$ to $Q_4$ the mesh 
adaptation process reaches the stopping criterion faster and requires less degrees of 
freedom. This observation is reasonable, since a higher order approximation of the 
dual problem is closer to its exact solution of the dual problem, which is part of 
the error representation \eqref{BruchhaeuserEq:localER}. Thus we conclude that a 
better approximation of the weights provides a higher accuracy of the error estimator.
This observation is also confirmed by the pairs of $Q_2/Q_3$ with $Q_2/Q_4$ based finite 
element spaces. Nevertheless, the difference for using higher-order finite elements for 
solving the dual problem is not that significant, even less if we take into
account the higher computational costs for solving the algebraic form of the dual problem 
for an increasing order of the piecewise polynomials. Using pairs of $Q_k/Q_{k+1}$ based 
elements for the approximation of the primal and dual problem, the error 
estimator gets worse for increasing values of the parameter $k$. This observation is in 
good agreement with the results in \cite[Example 3]{BruchhaeuserEW17}. A reason for 
this behavior is given by the observation that for increasing values of $k$ the mesh is 
less refined for the same number of degrees of freedom. Therefore less cells are available 
to capture the strong gradients of the exact solution. This argues for choosing smaller 
values of $k$ in the application of our DWR based approach.


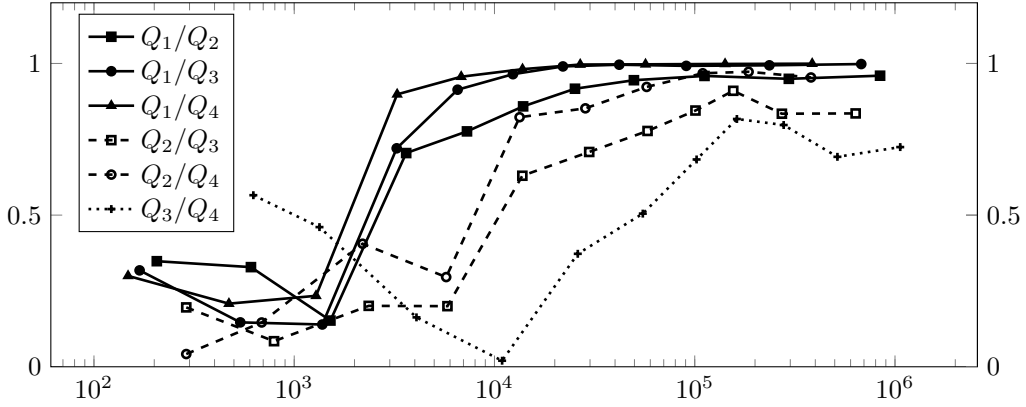
\begin{figure}
\centering
\begin{minipage}{.9\linewidth}
\centering
\begin{tikzpicture}
\begin{axis}[%
width=4.8in,
height=1.9in,
scale only axis,
/pgf/number format/.cd, 1000 sep={},
xmode=log,
ymin=0.,
ymax=1.2,
yminorticks=true,
legend style={legend pos=north west},
legend style={draw=black,fill=white,legend cell align=left},
legend entries = {
  {$Q_1/Q_2$},
  {$Q_1/Q_3$},
  {$Q_1/Q_4$},
  {$Q_2/Q_3$},
  {$Q_2/Q_4$},
  {$Q_3/Q_4$},
}
]

\addplot [
color=black,
solid,
line width=1.0pt,
mark=square*,
mark size = 1.5,
mark options={solid,black}
]
table[row sep=crcr]{
206    0.34805\\ 
608    0.32849\\ 
1516   0.15237\\ 
3623   0.70455\\ 
7290   0.77605\\ 
13902  0.85846\\ 
25232  0.91668\\ 
49691  0.94474\\ 
111871 0.95881\\ 
295106 0.94930\\ 
842390 0.95961\\
};
\addplot [
color=black,
solid,
line width=1.0pt,
mark=*,
mark size = 1.5,
mark options={solid,black}
]
table[row sep=crcr]{
169    0.31781\\ 
538    0.14636\\ 
1381   0.13935\\ 
3249   0.72024\\ 
6545   0.91346\\ 
12366  0.96456\\ 
21968  0.99017\\ 
41974  0.99585\\ 
90709  0.99201\\ 
235885 0.99384\\ 
678368 0.99800\\
};
\addplot [
color=black,
solid,
line width=1.0pt,
mark=triangle*,
mark size = 1.5,
mark options={solid,black}
]
table[row sep=crcr]{
148       0.299602\\ 
472       0.208077\\ 
1285      0.233949\\ 
3266      0.898114\\ 
6821      0.956222\\ 
13806     0.981114\\ 
26801     0.996459\\ 
56779     0.997107\\ 
141736    0.998407\\ 
385973    0.999241\\ 
};
\addplot [
color=black,
dashed,
line width=1.0pt,
mark=square,
mark size = 1.5,
mark options={solid,black}
]
table[row sep=crcr]{
289       0.19575\\ 
793       0.08433\\ 
2355      0.20084\\ 
5823      0.19980\\ 
13774     0.62980\\ 
29720     0.70814\\ 
58124     0.77744\\ 
100832    0.84480\\ 
155696    0.90983\\ 
272336    0.83430\\ 
636167    0.83525\\ 
};
\addplot [
color=black,
dashed,
line width=1.0pt,
mark=o,
mark size = 1.5,
mark options={solid,black}
]
table[row sep=crcr]{
289     0.04194\\ 
688     0.14602\\ 
2194    0.40566\\ 
5739    0.29567\\ 
13346   0.82268\\ 
28438   0.85201\\ 
57514   0.92255\\ 
109637  0.96761\\ 
185745  0.97255\\ 
380663  0.95379\\ 
};
\addplot [
color=black,
dotted,
line width=1.0pt,
mark=+,
mark size = 1.5,
mark options={solid,black}
]
table[row sep=crcr]{
625     0.56560 \\
1337    0.46044 \\
4078    0.16289 \\
10918   0.01999 \\
26061   0.37244 \\
55042   0.50493 \\
102162  0.68346 \\
162365  0.81690 \\
278043  0.79704 \\
515776  0.69216 \\
1061925 0.72426 \\
};
\end{axis}
\begin{axis}[
width=4.8in,
height=1.9in,
scale only axis,
/pgf/number format/.cd, 1000 sep={},
ymin=0,
ymax=1.2,
hide x axis,
axis y line*=right,
]
\addplot[opacity=0]{1.2};
\end{axis}
\end{tikzpicture}
\end{minipage}
\caption{Effectivity indices over degrees of freedom for the target quantity 
\eqref{BruchhaeuserEq:HumpTarget} and different polynomial degrees for 
Example \ref{BruchhaeuserExample1}.}
\label{BruchhaeuserFig:IeffMH}
\end{figure}


\subsection{Example 2 (Point-value error control, 2d).}
\label{BruchhaeuserExample2}

In this example we study the application of our approach for different target 
functionals and a sequence of decreasing diffusion coefficients. 
Thereby we aim to analyze the robustness of the approach with respect to the 
small perturbation parameter $\varepsilon$ in \eqref{BruchhaeuserEq:cdr}. 
If convection-dominated problems are considered, or also often for applications of 
practical interest, local quantities are of greater interest than global ones. The DWR 
approach offers the appreciable advantage over standard a posteriori error estimators 
that an error control in an arbitrary user-chosen quantity and not 
only in the global $L^2$ norm, as in Example 1, or a norm of energy type can be 
obtained. Since the error representation is exact up to higher order terms (cf.\ Thm.\ 
\ref{BruchhaeuserThm:LocPresent}), robustness with respect to the perturbation parameter 
$\varepsilon$ can be expected to be feasible. Of course, the approximation of 
the dual solution $z$ in \eqref{BruchhaeuserEq:localER} adds a source of 
uncertainty in the error representation. In the sequel, we evaluate the potential of our 
approach with respect to these topics for different target functionals. As a benchmark 
problem we consider problem \eqref{BruchhaeuserEq:cdr} for the solution (cf.\ 
\cite[Example 4.2]{BruchhaeuserLR06})
\begin{equation}
\label{BruchhaeuserEq:SolExp2}
u(\boldsymbol{x}) = \frac{1}{2}\left(1-\operatorname{tanh}\frac{2x_1 - 
x_2-0.25}{\sqrt{5\varepsilon}}\right)
\end{equation}
with corresponding right-hand side function $f$. Further, $\Omega = (0,1)^2$, $\alpha = 
1.0$, $\boldsymbol{b} = \frac{1}{\sqrt{5}}(1,2)^\top$. The Dirichlet boundary condition is given 
by the exact solution. The solution is characterized by an 
interior layer of thickness 
$\mathcal O(\sqrt{\varepsilon}\vert \operatorname{ln} \varepsilon\vert)$. 
We study the target functionals 
\begin{equation*}
J_{L^2}(u) = \frac{1}{\Vert e\Vert_{L^2(\Omega)}} 
(e,u)_{\Omega}\,, \qquad J_1(u) = \int_\Omega u\,\mathrm{d}\boldsymbol{x} \qquad 
\mbox{and} \qquad J_2(u) = u(\boldsymbol{x}_e)\,,
\end{equation*}
where $e:=u-u_h$ and with a user-prescribed control point $\boldsymbol{x}_e 
= \left(\frac{5}{16},\frac{3}{8}\right)$ that is located in the interior of the 
layer. In our computations we regularize $J_2(\cdot)$ by 
\begin{equation*}
J_{r}(u) = \frac{1}{\vert B_r\vert}\int_{B_r} u(\boldsymbol{x})\,\mathrm{d}\boldsymbol{x}\,,
\end{equation*}
where the ball $B_r$ is defined by $B_r = \left\{\boldsymbol{x}\in\Omega \mid \|
\boldsymbol{x}-\boldsymbol{x}_e\| < r \right\}$ with small radius $r>0$.

Here, all test cases are solved by using the $Q_1/Q_2$ pair of finite elements for 
the primal and dual problem which is due to the observations depicted in Example~1. In 
Table~\ref{BruchhaeuserTab:TanhIeff} and Fig.~\ref{BruchhaeuserFig:IeffTH6-8}
we present the effectivity indices of the proposed DWR approach applied to the 
stabilized approximation scheme \eqref{BruchhaeuserEq:ASprimal} for a sequence 
of vanishing diffusion coefficients. 
For the target functionals $J_{L^2}(\cdot)$ and $J_1(\cdot)$ the effectivity 
indices nicely converge to one for an increasing number of degrees of freedom. Moreover, 
the expected convergence behavior is robust with respect to the small diffusion 
parameter $\varepsilon$. For the more challenging error control of a point-value, which 
however can be expected to be of higher interest in practice, the effectivity 
indices also convergences nicely to one. This is in good agreement with effectivity 
indices for point-value error control that are given in other 
works of the literature for the pure Poisson problem; cf.~\cite[p.~45]{BruchhaeuserBR03}. 
We note that in the case of a point-value error control the target functional lacks the 
regularity of the right-hand side term in the dual problem that is typically needed to 
ensure the existence and regularity of weak solutions; cf.~\cite[Chapter 
6.2]{BruchhaeuserE10}. 
However, no impact of this lack of regularity is observed in the computational 
studies. Thus, for all target functionals a robust convergence behavior is 
ensured for this test case.

\begin{table} 
\centering

\begin{minipage}{\linewidth}
\centering

\resizebox{1.\linewidth}{!}{%
\begin{tabular}{cc|cc|cc|cc|cc|cc|cc|cc|cc}
\toprule
\toprule
\multicolumn{6}{c|}{$\varepsilon=10^{-6}$} &
\multicolumn{6}{c|}{$\varepsilon=10^{-7}$} &
\multicolumn{6}{c}{$\varepsilon=10^{-8}$}\\
\midrule
\multicolumn{2}{c|}{$J_{L^2}$} & \multicolumn{2}{c|}{$J_1$} & \multicolumn{2}{c|}{$J_r$} &
\multicolumn{2}{c|}{$J_{L^2}$} & \multicolumn{2}{c|}{$J_1$} & \multicolumn{2}{c|}{$J_r$} &
\multicolumn{2}{c|}{$J_{L^2}$} & \multicolumn{2}{c|}{$J_1$} & \multicolumn{2}{c}{$J_r$}\\
\midrule
dofs & $\mathcal I_{\mathrm{eff}}$ &
  dofs & $\mathcal I_{\mathrm eff}$ &
  dofs & $\mathcal I_{\mathrm eff}$ &
dofs & $\mathcal I_{\mathrm{eff}}$ &
  dofs & $\mathcal I_{\mathrm eff}$ &
  dofs & $\mathcal I_{\mathrm eff}$ &
dofs & $\mathcal I_{\mathrm{eff}}$ &
  dofs & $\mathcal I_{\mathrm eff}$ &
  dofs & $\mathcal I_{\mathrm eff}$ \\
\midrule
\midrule
 4531 & 0.72 & 5355  & 0.14  & 4334  & 1.09 & 1643   & 0.32 & 4034    & 0.12 
 & 753     & 0.73 & 1477 & 0.46 & 2810 & 0.56 & 1841 & 1.00 \\
 7603 & 0.84 & 9458  & 1.92  & 6468  & 1.16 & 2772   & 0.37 & 8383    & 6.38 
 & 1841    & 0.99 & 5668 & 0.54 & 10925 & 0.90 & 3301 & 1.22 \\
 13319 & 0.92 & 16067  & 1.73 & 13851  & 1.58 & 5094   & 0.43 & 16314   & 0.01 
 & 6358    & 1.34 & 10005 & 0.56 & 22028 & 0.05 & 6411 & 1.46 \\
24418 & 0.98 & 28584 & 0.85  & 19670  & 1.19 & 10086  & 0.53 & 28310   & 0.13 
& 12674   & 1.23 & 17974 & 0.59 & 41088 & 1.72 & 12904 & 1.72 \\
42174 & 1.00 & 52024 & 3.06  & 38002  & 0.99 & 21071  & 0.67 & 44111   & 0.61 
& 25558   & 1.09 & 41402 & 0.66 & 71646 & 0.26 & 27039 & 1.74 \\
76341 & 1.01 & 95006 & 1.07  & 51603 & 1.20 & 46172  & 0.84 & 72705   & 1.07 
& 54549   & 0.96 & 90486 & 0.78 & 141031 & 1.96 & 58254 & 1.46 \\
126757 & 0.99 & 179893 & 1.01  & 119171 & 0.94 & 103077 & 0.97 & 178757  & 0.85 
& 139531  & 1.31 & 253203 & 0.90 & 305855 & 1.01 & 123436 & 1.24 \\
224160 & 1.01 & 307864 & 1.00  & 357046 & 0.90 & 240672 & 1.02 & 433232  & 1.00 
& 387749  & 1.27 & 502287 & 1.00 & 608497 & 1.04 & 274188 & 1.13 \\
409008 & 0.99 & 560046 & 1.00  & 571577 & 1.14 & 580812 & 1.00 & 1003495 & 1.00 
& 1181627 & 1.01 & 801381 & 1.01 & 856320 & 1.01 & 691860 & 1.08 \\
\bottomrule
\end{tabular}}
\end{minipage}
\caption{Effectivity indices for the target quantities $J_{L^2}$\, $J_1$ and 
$J_r$ and different decreasing perturbation parameters for Example \ref{BruchhaeuserExample2}.}
\label{BruchhaeuserTab:TanhIeff}
\end{table}

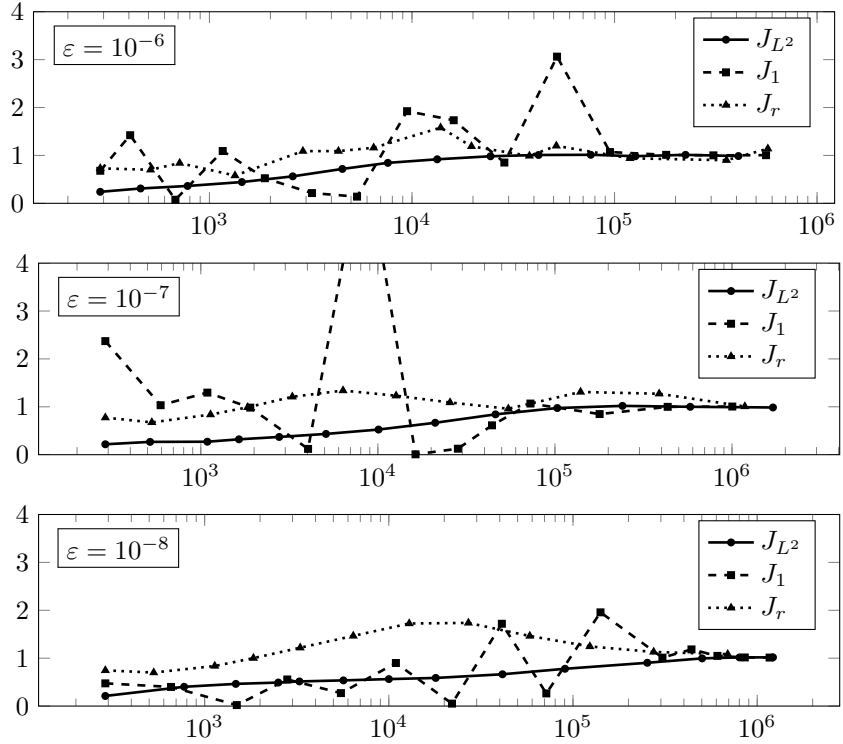
\begin{figure}[h]
\centering
\begin{minipage}{.8\linewidth}
\centering
\begin{tikzpicture}
\begin{axis}[%
width=4.15in,
height=1.0in,
scale only axis,
/pgf/number format/.cd, 1000 sep={},
xmode=log,
ymin=0,
ymax=4.,
yminorticks=true,
legend style={legend pos=north east},
legend style={draw=black,fill=white,legend cell align=left},
legend entries = {
  {$J_{L^2}$},
  {$J_1$},
  {$J_r$},
}
]
\node at (axis cs:340, 3.3) {\fbox{$\varepsilon=10^{-6}$}};
\addplot [
color=black,
solid,
line width=1.0pt,
mark=*,
mark size = 1.,
mark options={solid,black}
]
table[row sep=crcr]{
289    0.24\\
457    0.30981\\ 
780    0.36221\\ 
1446   0.44413\\ 
2578   0.56326\\ 
4531   0.71800\\ 
7603   0.84463\\ 
13319  0.91903\\ 
24418  0.98263\\ 
42174  1.00922\\ 
76341  1.01295\\ 
126757 0.98429\\ 
224160 1.01183\\ 
409008 0.98693\\ 
};
\addplot [
color=black,
dashed,
line width=1.0pt,
mark=square*,
mark size = 1.,
mark options={solid,black}
]
table[row sep=crcr]{
289    0.67938\\ 
406    1.42246\\ 
679    0.07514\\ 
1163   1.09174\\ 
1878   0.52360\\ 
3208   0.21393\\ 
5355   0.13955\\ 
9458   1.92381\\ 
16067  1.73461\\ 
28584  0.85146\\ 
52024  3.06259\\ 
95024  1.07054\\ 
179893 1.01165\\ 
307864 1.00189\\ 
560046 1.00476\\ 
};
\addplot [
color=black,
dotted,
line width=1.0pt,
mark=triangle*,
mark size = 1.,
mark options={solid,black}
]
table[row sep=crcr]{
289  0.73\\ 
512  0.70\\
713  0.84\\
1337 0.58\\
2894 1.09\\
4334 1.09\\
6468 1.16\\
13851 1.58\\
19670 1.19\\
38002 0.99\\
51603 1.20\\
119171  0.94\\ 
357046  0.90\\ 
571577  1.14\\ 
};
\end{axis}
\end{tikzpicture}
\end{minipage}
\label{BruchhaeuserFig:Ieff6}

\begin{minipage}{.8\linewidth}
\centering
\begin{tikzpicture}
\begin{axis}[%
width=4.15in,
height=1.0in,
scale only axis,
/pgf/number format/.cd, 1000 sep={},
xmode=log,
ymin=0.,
ymax=4,
yminorticks=true,
legend style={legend pos=north east},
legend style={draw=black,fill=white,legend cell align=left},
legend entries = {
  {$J_{L^2}$},
  {$J_1$},
  {$J_r$},
}
]
\node at (axis cs:345, 3.3) {\fbox{$\varepsilon=10^{-7}$}};
\addplot [
color=black,
solid,
line width=1.0pt,
mark=*,
mark size = 1.,
mark options={solid,black}
]
table[row sep=crcr]{
289     0.217493\\
517     0.267445\\
1089    0.27084\\ 
1643    0.32119\\ 
2772    0.37044\\ 
5094    0.43361\\ 
10086   0.52509\\ 
21071   0.66639\\ 
46172   0.84212\\ 
103077  0.97433\\ 
240672  1.01933\\ 
580812  1.00006\\ 
1702384 0.98586\\ 
};

\addplot [
color=black,
dashed,
line width=1.0pt,
mark=square*,
mark size = 1.,
mark options={solid,black}
]
table[row sep=crcr]{
289     2.37167\\
593     1.03321\\
1089    1.29625\\ 
1902    0.98299\\ 
4034    0.12107\\ 
8383    6.37956\\ 
16314   0.00773\\ 
28310   0.12527\\ 
44111   0.61316\\ 
72705   1.06796\\ 
178757  0.84844\\ 
433232  0.99896\\ 
1003495 1.00575\\ 
};

\addplot [
color=black,
dotted,
line width=1.0pt,
mark=triangle*,
mark size = 1.,
mark options={solid,black}
]
table[row sep=crcr]{
289     0.77417\\ 
529     0.67431\\ 
1135    0.83784\\ 
1841    0.98937\\ 
3289    1.20918\\ 
6358    1.33957\\ 
12674   1.23139\\ 
25558   1.08985\\ 
54549   0.95962\\
139531  1.30964 \\
387749  1.27269 \\
1181627 1.00678 \\
};
\end{axis}
\end{tikzpicture}
\end{minipage}
\label{BruchhaeuserFig:Ieff7}

\begin{minipage}{.8\linewidth}
\centering
\begin{tikzpicture}
\begin{axis}[%
width=4.15in,
height=1.0in,
scale only axis,
/pgf/number format/.cd, 1000 sep={},
xmode=log,
ymin=0.,
ymax=4.,
yminorticks=true,
legend style={legend pos=north east},
legend style={draw=black,fill=white,legend cell align=left},
legend entries = {
  {$J_{L^2}$},
  {$J_1$},
  {$J_r$},
}
]
\node at (axis cs:340, 3.3) {\fbox{$\varepsilon=10^{-8}$}};
\addplot [
color=black,
solid,
line width=1.0pt,
mark=*,
mark size = 1.,
mark options={solid,black}
]
table[row sep=crcr]{
289  0.21117\\ 
774  0.40254\\ 
1477  0.46318\\ 
2512  0.49118\\ 
3271  0.51445\\ 
5668  0.53637\\ 
10005  0.56328\\ 
17974  0.58717\\ 
41402  0.66404\\ 
90486  0.77843\\ 
253203  0.90362\\ 
502287  0.99594\\ 
801387  1.01605\\ 
1217439  1.01700\\ 
};

\addplot [
color=black,
dashed,
line width=1.0pt,
mark=square*,
mark size = 1.,
mark options={solid,black}
]
table[row sep=crcr]{
289     0.47430\\ 
657     0.39940\\ 
1495    0.02082\\ 
2810    0.55544\\ 
5495    0.27093\\ 
10925   0.89903\\ 
22028   0.05145\\ 
41088   1.71868\\ 
71646   0.26428\\ 
141031  1.95700\\ 
305855  1.01163\\
439715  1.18297\\ 
608497  1.04809\\ 
856320  1.01507\\
1167807 1.01292\\
};

\addplot [
color=black,
dotted,
line width=1.0pt,
mark=triangle*,
mark size = 1.,
mark options={solid,black}
]
table[row sep=crcr]{
289    0.74524\\
529    0.69979\\ 
1135   0.83682\\ 
1841   1.00114\\ 
3301   1.21824\\ 
6411   1.46463\\ 
12904  1.72397\\ 
27039  1.73650\\ 
58254  1.46300\\ 
123436 1.24481\\
274188 1.12988\\
691860 1.08252\\
};
\end{axis}
\end{tikzpicture}
\end{minipage}
\label{BruchhaeuserFig:Ieff8}

\caption{Effectivity indices over degrees of freedom for different target 
functionals and decreasing diffusion coefficient for 
Example \ref{BruchhaeuserExample2}.}
\label{BruchhaeuserFig:IeffTH6-8}
\end{figure}


For completeness, in Figure \ref{BruchhaeuserFig:TanhSol} we visualize the 
computed solution profiles and adaptive meshes for an error control based on the 
local target functional $J_r(\cdot)$ and the global target functional 
$J_{L^2}(\cdot)$, respectively. This test case nicely illustrates the potential of the 
DWR approach. For the point-value error control the refined mesh cells are located close 
to the specified point of interest and along those cells that affect the 
point-value error by means of transport in the direction of the flow field~$\boldsymbol{b}$. 
Furthermore, the mesh cells without strong impact on the solution close to the control 
point are coarsened further. Even though a rough approximation of the sharp interface is 
obtained in downstream direction from the viewpoint of the control point, in its 
neighborhood an excellent approximation of the sharp layer is ensured by the approach. A 
highly economical mesh along with a high quality in the computation of the user-specified 
goal quantity is thus obtained. In contrast to this, the global error control of 
$J_{L^2}(\cdot)$ provides a good approximation of the solution in the whole domain by 
adjusting the mesh along the complete layer.

\begin{figure}
\centering
\subfloat[Point-value error control.]{
  \centering
\begin{minipage}{.45\linewidth}
\centering
\includegraphics[width=4.2cm]{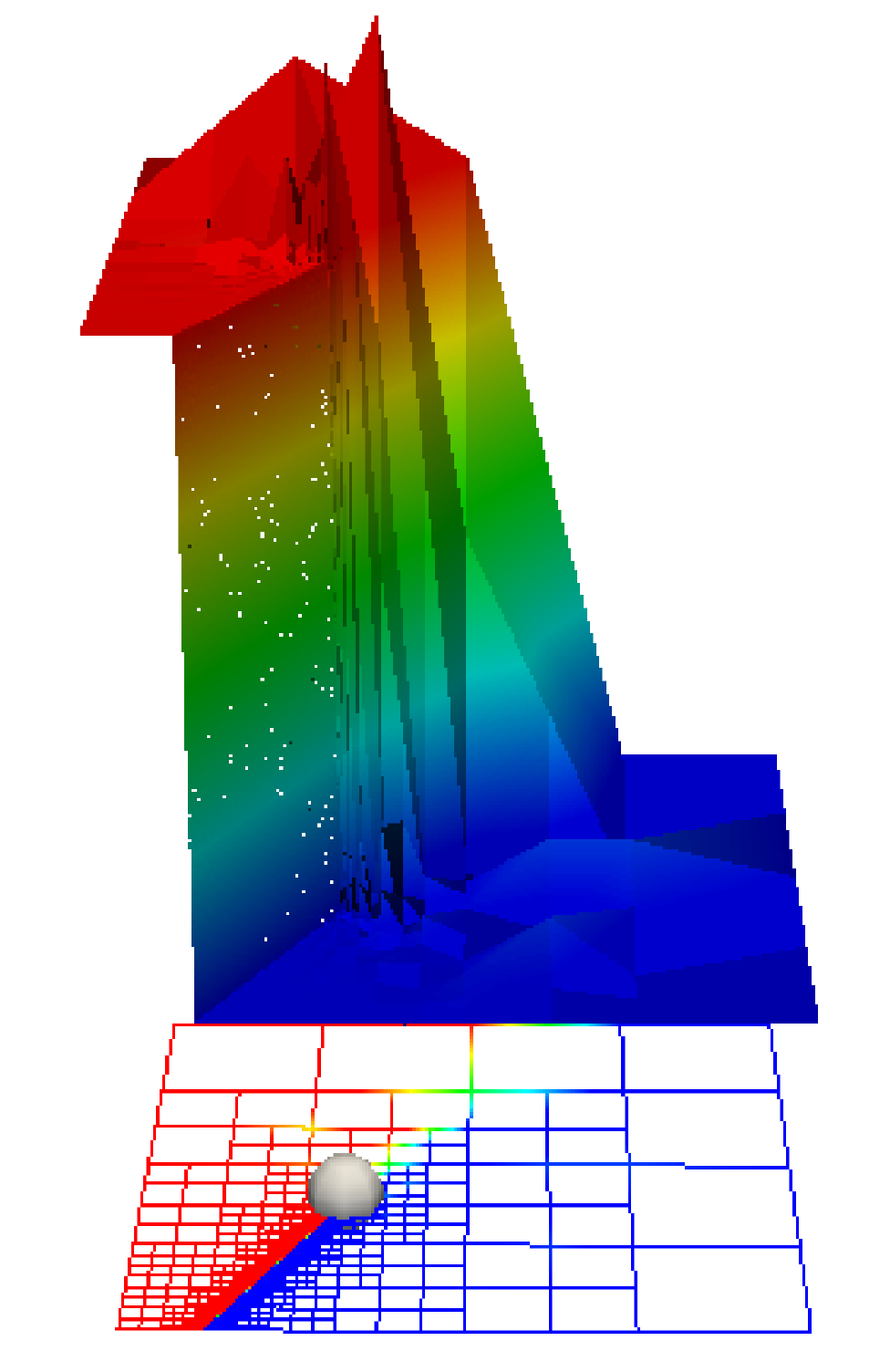}
\end{minipage}
\label{BruchhaeuserFig:PVEC}
}
\hfill
\subfloat[Global error control.]{
  \centering
\begin{minipage}{.45\linewidth}
\centering
\includegraphics[width=4.5cm]{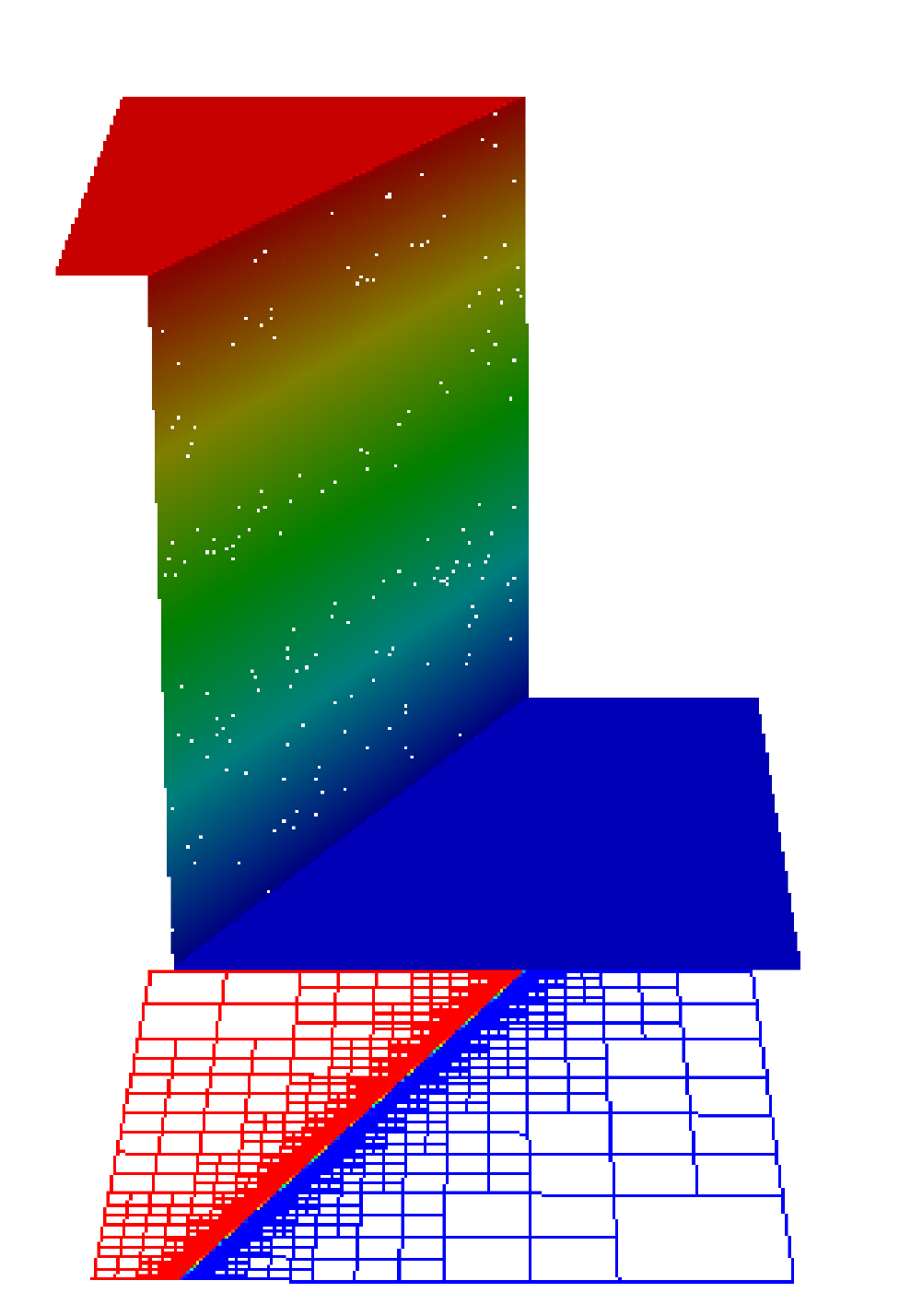}
\end{minipage}
\label{BruchhaeuserFig:GEC}
}
\caption{Point-value error control by $J_2$ (\ref{BruchhaeuserFig:PVEC}) and global error control
by $J_{L^2}$
(\ref{BruchhaeuserFig:GEC}) by the DWR approach for Example \ref{BruchhaeuserExample2}.}
\label{BruchhaeuserFig:TanhSol}
\end{figure}

\subsection{Example 3 (Variable convection field, 3D).} 
\label{BruchhaeuserExample3}
In our last test case we apply the approach to a three-dimensional test case 
which represents a more challenging task. 
Moreover, we consider a velocity field $\boldsymbol{b}$ depending on the space variable 
$\boldsymbol{x}$. Precisely, we consider problem \eqref{BruchhaeuserEq:cdr} with the unit 
cube $\Omega=(0,1)^3$, $\varepsilon = 10^{-6}$, $\alpha=1$, 
$\boldsymbol{b}=(-x_2,x_1,0)^\top$ and $f\equiv 0$. The boundary conditions are given by 
$\frac{\partial u}{\partial \boldsymbol{n}}=0$ on $\Gamma_N=\{\boldsymbol{x}~\in~\Omega \mid x_1 = 
0\}$, $u = 1$ on $\Gamma_{D_1}=\{\boldsymbol{x}\in\Omega \mid 0.4 \leq x_1 \leq 0.6,
x_2 = 0, 0.4 \leq x_3 \leq 0.6 \}$, and $u = 0$ on $\Gamma_{D_2}=
\partial\Omega\backslash\{\Gamma_N \cup \Gamma_{D_1}\}$. Thus, by the boundary part 
$\Gamma_{D_1}$ we model an inflow region (area) where the transport quantity modelled by 
the unknown $u$ is injected; cf.\ Fig.~\ref{BruchhaeuserFig:VariableConField}. $\Gamma_N$ 
models an outflow boundary. Prescribing a homogeneous Dirichlet condition on 
$\Gamma_{D_2}$ is done for the sake of simplicity and of no real relevance for the test 
setting. The target functional aims at the control of the solution's mean value in a 
smaller, inner domain $\Omega_{In}=[0,0.1]\times[0.4,0.6]\times[0.4,0.6]$, and is given by
$$J_4(u)=\int_{\Omega_{In}}u \,\mathrm{d}\boldsymbol{x}\,.$$
In the context of applications, the transport quantity $u$ is thus measured and 
controlled in the small region of interest $\Omega_{In}$.

Figure \ref{BruchhaeuserFig:VariableConField} illustrates the computed adaptively 
generated meshes for some of the DWR iteration steps. For visualization purposes, two 
surfaces with corresponding mesh distribution are shown for each grid, the bottom surface 
and the surface in the domain's center with respect to the $x_3$ direction. We note that 
the postprocessed solutions are visualized on a grid for the respective surfaces. The 
cells on the surfaces are triangular-shaped since the used visualization software 
\texttt{ParaView} is based on triangular-shaped elements. Similar to the previous 
test case of a point-value error control, the refinement is located on those cells that 
affect the mean value error control. Here, the cells close to the two inner layers 
aligned in the flow direction $\boldsymbol{b}$ are strongly refined. This refinement 
process is obvious since the inner and control domain $\Omega_{In}$ is chosen to have 
exactly the same dimensions as the channel-like extension of the boundary segment 
$\Gamma_{D_1}$ along the flow direction into the domain $\Omega$. Outside the inner domain 
$\Omega_{In}$ and the channel-like domain of transport the mesh cells are coarsened for  
an increasing number of DWR iteration steps.

\begin{figure}[h]
\centering
\subfloat[]{
  \centering
\begin{minipage}{.45\linewidth}
\centering
\includegraphics[width=5.5cm]{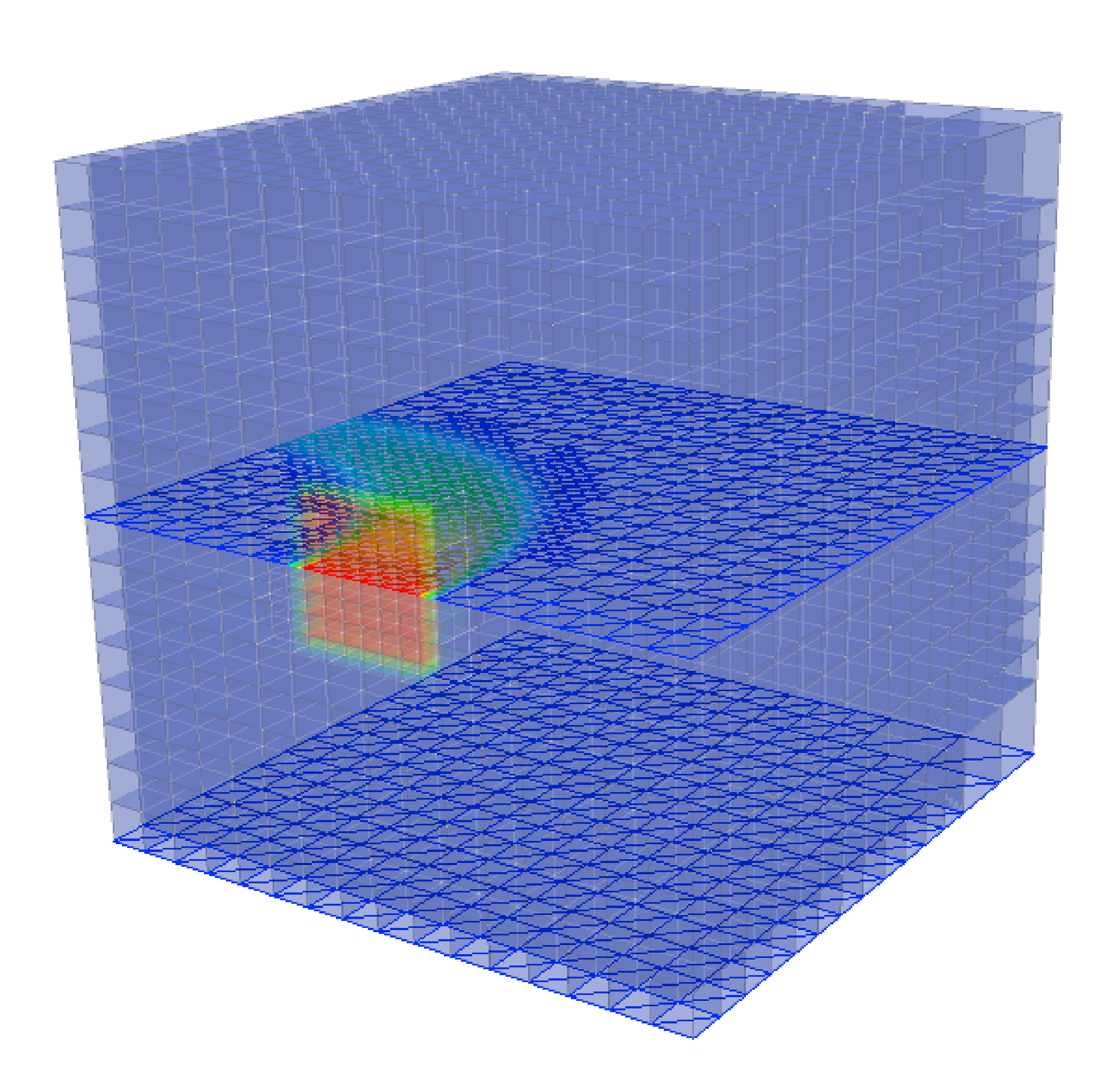}
\end{minipage}
\label{BruchhaeuserFig:61}
}
\hfill
\subfloat[]{
  \centering
\begin{minipage}{.45\linewidth}
\centering
\includegraphics[width=5.5cm]{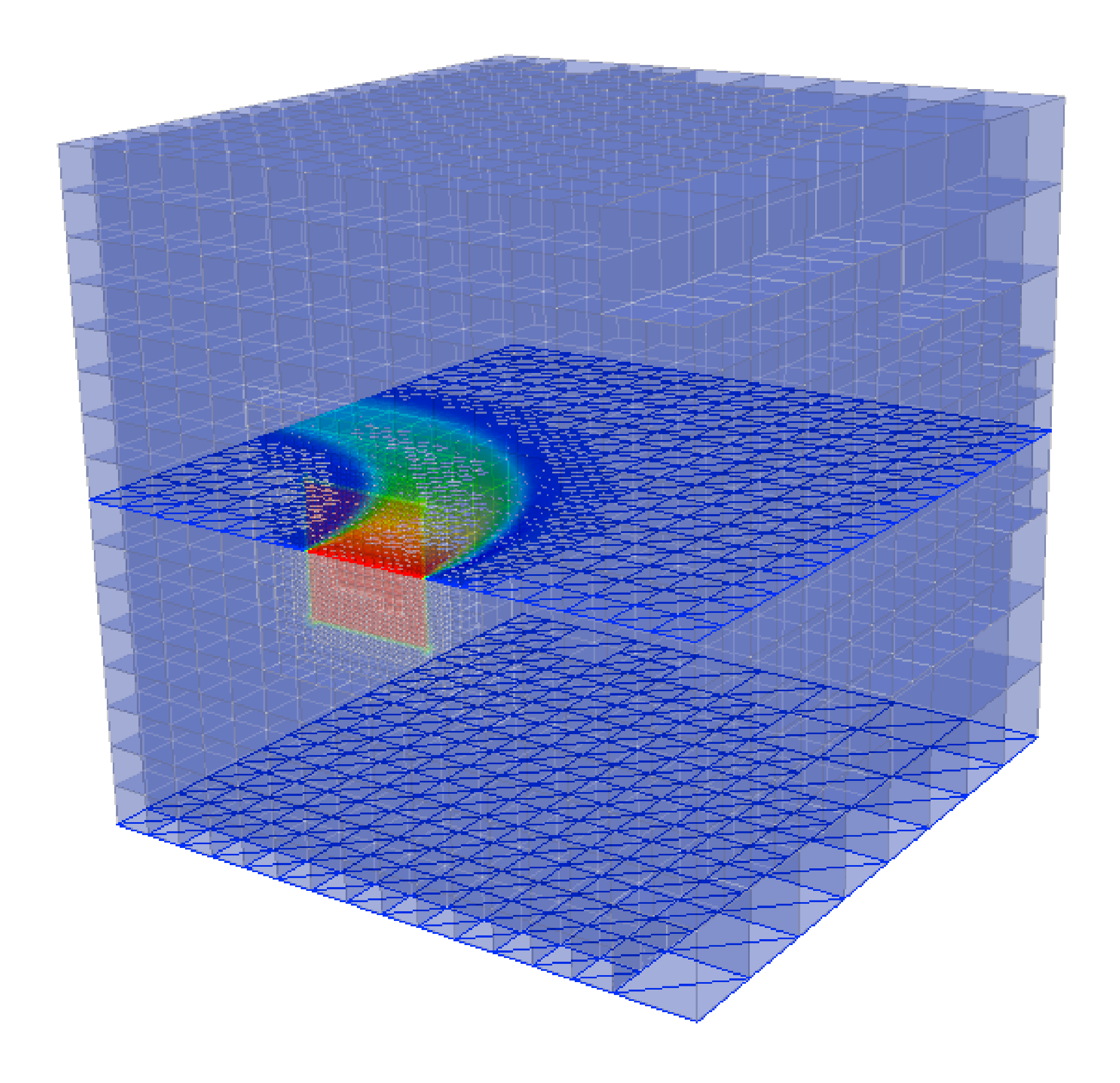}
\end{minipage}
\label{BruchhaeuserFig:63}
}

\subfloat[]{
  \centering
\begin{minipage}{.45\linewidth}
\centering
\includegraphics[width=5.5cm]{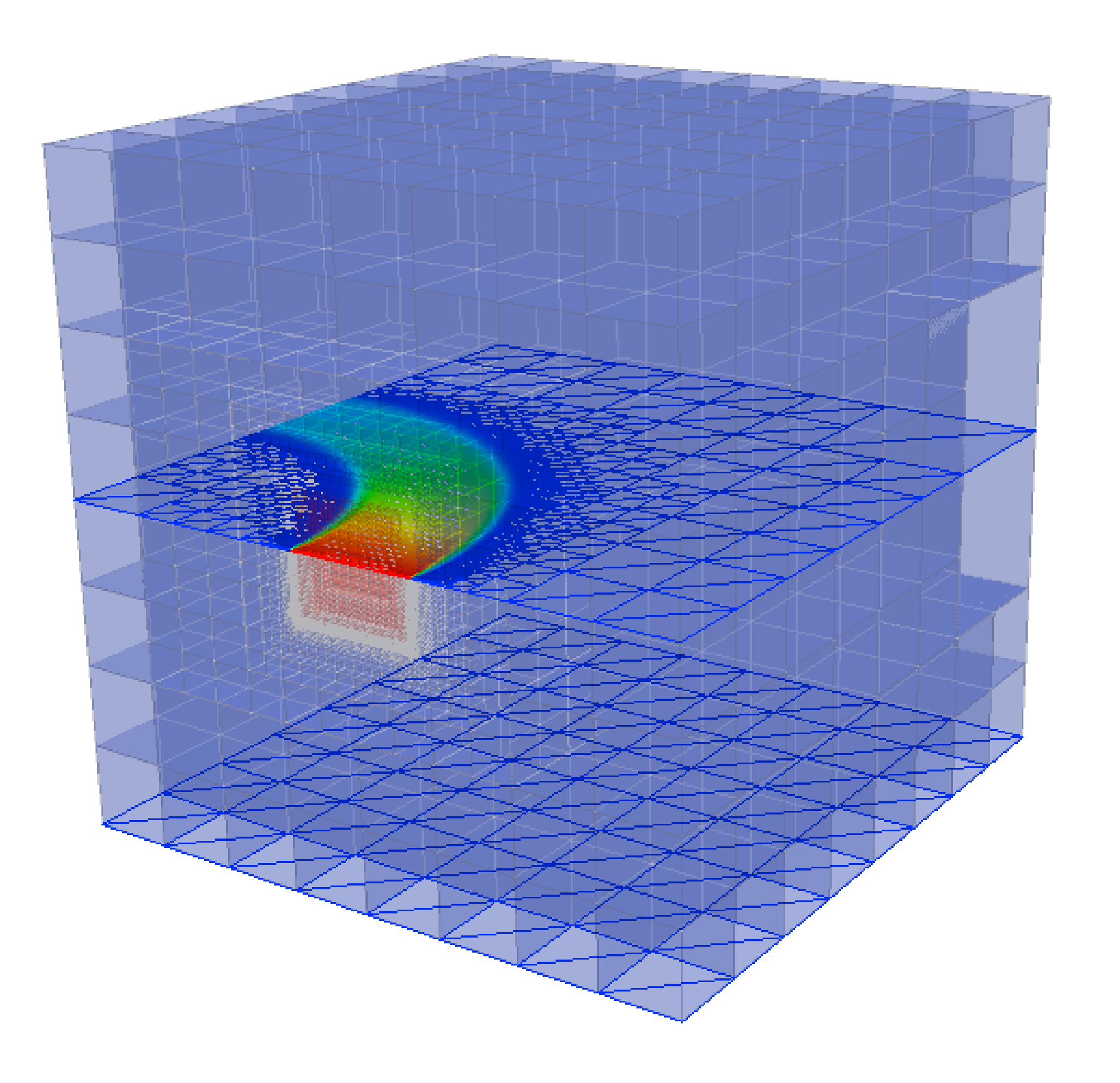}
\end{minipage}
\label{BruchhaeuserFig:66}
}
\hfill
\subfloat[]{
  \centering
\begin{minipage}{.45\linewidth}
\centering
\includegraphics[width=5.5cm]{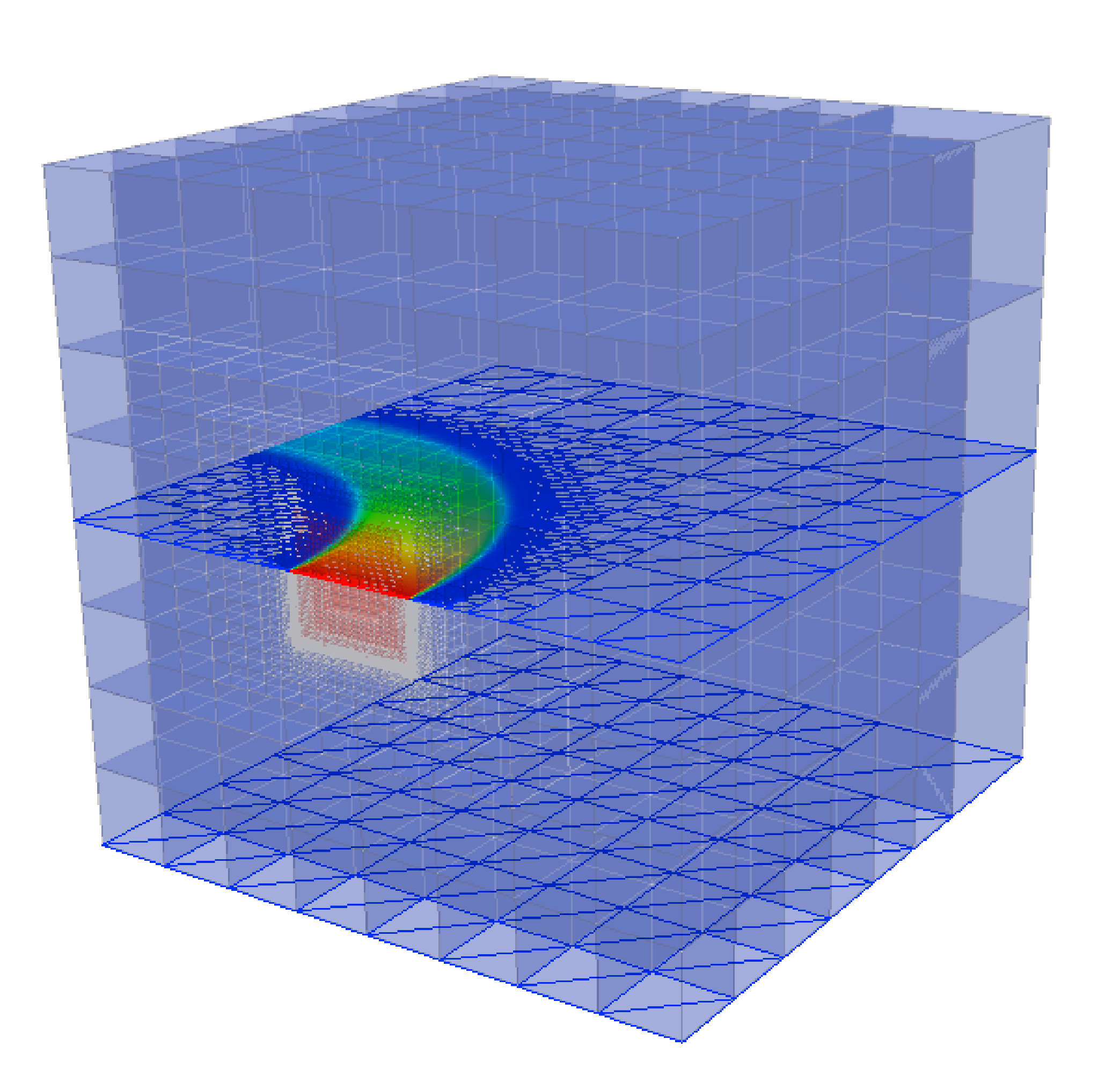}
\end{minipage}
\label{BruchhaeuserFig:67}
}
\caption{Adaptive grids after first (\ref{BruchhaeuserFig:61}), 
third (\ref{BruchhaeuserFig:63}), fifth (\ref{BruchhaeuserFig:66}) and 
seventh (\ref{BruchhaeuserFig:67}) iteration step of the DWR approach with coarsening and 
refinement with target functional $J_1$ for Example \ref{BruchhaeuserExample3}.}
\label{BruchhaeuserFig:VariableConField}
\end{figure}

\section{Summary}
\label{BruchhaeuserSec:summary}
In this work we developed an adaptive approach for stabilized finite element
approximations of stationary convection-dominated problems. It is based on the 
Dual Weighted Residual method for goal-oriented a posteriori error control. 
A \emph{first dualize and then stabilize} philosophy was applied for combining 
the mesh adaptation process in the course of the DWR approach with the 
stabilization of the finite element techniques. In contrast to other works of 
the literature we used a higher order 
approximation of the dual problem instead of a higher order interpolation of 
a lower order approximation of the dual solution. Thereby we aim to eliminate 
sources of inaccuracies in regions with layers and close to sharp fronts. 
In numerical experiments we could prove that spurious oscillations that 
typically arise in numerical approximations of convection-dominated problems 
could be reduced significantly. Robust effectivity indices very close to one were 
obtained for the specified test target quantities. We demonstrated the efficiency of the 
approach also for three space dimensions. The presented approach offers 
large potential for combining goal-oriented error control and selfadaptivity 
with stabilized finite element methods in the approximation of 
convection-dominated transport. The application of the approach to nonlinear, 
nonstationary and more sophisticated problems of multiphysics is our ongoing work. 
Moreover, the efficient computation of the higher order approximation to the dual problem 
offers potential for optimization. 
This will also be our work for the future. 

%
%
%

\end{document}